\numberwithin{equation}{section}
\newtheorem{step}{Step}
\newtheorem{thm}{Theorem}[section]
\newtheorem{theorem}[thm]{Theorem}
\newtheorem{lemma}[thm]{Lemma}
\newtheorem{prop}[thm]{Proposition}
\theoremstyle{remark}
\newcommand{\ghat}{{\widehat{g}}}
\newcommand{\Khat}{\widehat{K}}
\newcommand{\phibar}{\overline{\varphi}}
\newcommand{\Cbar}{\overline{C}}
\newcommand{\Ibar}{\overline{I}}
\newcommand{\Atil}{\widetilde{A}}
\newcommand{\Wtil}{\widetilde{W}}
\newcommand{\phitil}{\widetilde{\phi}}
\newcommand{\pitil}{\widetilde{\pi}}
\newcommand{\sigmatil}{\widetilde{\sigma}}
\newcommand{\cB}{\mathcal{B}}
\newcommand{\cM}{\mathcal{M}}
\newcommand{\cR}{\mathcal{R}}
\newcommand{\psihat}{\widehat{\psi}}
\newcommand{\pihat}{\widehat{\pi}}
\newcommand{\bR}{\mathbb{R}}
\newcommand{\bL}{\mathbb{L}}
\newcommand{\ric}{\mathrm{Ric}}
\newcommand{\scal}{\mathrm{Scal}}
\DeclareMathOperator{\tr}{tr}
\DeclareMathOperator{\divg}{div}
\DeclareMathOperator{\hess}{Hess}
\DeclareMathOperator{\vol}{Vol}
\def\into{\hookrightarrow}
\newcommand{\definedas}{\mathrel{\raise.095ex\hbox{\rm :}\mkern-5.2mu=}}
\renewcommand{\leq}{\leqslant}
\renewcommand{\geq}{\geqslant}
\begin{document}

\title
{Solutions to the Einstein-scalar field constraint equations with a small
TT-tensor}

\begin{abstract}
In this paper, we prove a far-from-CMC result similar to
\cite{HNT1, HNT2, MaxwellNonCMC, GicquaudNgo} for the conformal
Einstein-scalar field constraint equations on compact Riemannian
manifolds with positive (modified) Yamabe invariant.
\end{abstract}

\author[R. Gicquaud]{Romain Gicquaud}
\address[R. Gicquaud]{
  Laboratoire de Math\'ematiques et de Physique Th\'eorique\\
  Universit\'e Fran\c cois Rabelais de Tours\\
  Parc de Grandmont\\ 37200 Tours \\ FRANCE}
\email{romain.gicquaud@lmpt.univ-tours.fr}

\author[C. Nguyen]{The Cang Nguyen}
\address[C. Nguyen]{
  Laboratoire de Math\'ematiques et de Physique Th\'eorique\\
  Universit\'e Fran\c cois Rabelais de Tours\\
  Parc de Grandmont\\ 37200 Tours \\ FRANCE}
\email{The-Cang.Nguyen@lmpt.univ-tours.fr}

\date{\today}

\keywords{Einstein constraint equations, non-CMC, conformal method,
positive Yamabe invariant, small TT-tensor}

\maketitle

\tableofcontents

\section{Introduction}\label{secIntro}
Finding initial data for the Cauchy problem in general relativity is a
topical issue. The Einstein equation impose restrictions on the choice
of initial data called the constraint equations
and finding (physically relevant) solutions to them is the first step
in understanding the Cauchy problem in general relativity. We refer
the reader to \cite{BartnikIsenberg} for an extensive description of
the constraint equations.

Having in mind the geometric nature of general relativity, initial data
for the Cauchy problem are usually given as a triple $(M, \ghat, \Khat)$,
where $M$ is a manifold (which we will assume closed, i.e. compact
without boundary, for simplicity),
$\ghat$ is a Riemannian metric on $M$ and $\Khat$ is a symmetric
2-tensor on $M$. The Cauchy problem in general relativity then
consists in the following:

Find a space-time, i.e. a Lorentzian manifold $(\cM, h)$ solving
the Einstein equation
\[
 \ric^{h}_{\mu\nu} - \frac{1}{2} \scal^{h} h_{\mu\nu}
  = T_{\mu\nu},
\]
together with an embedding $M \hookrightarrow \cM$ such that $M$ becomes
a Cauchy surface with induced metric $\ghat$ and second fundamental form
$\Khat$.

Here $T$ is the so-called stress-energy tensor of the non-gravitational
fields (e.g. matter fields, electromagnetic fields...) one also wants to
encompass in the description of the universe.

During the past decades, lots of effort have been dedicated to the study
of these equations. However, until fairly recently, the methods could
construct only constant mean curvature or almost constant mean
curvature initial data. They can be subdivided into two main categories.
The first one is the conformal method and its variants which is the
method we will use in this article. The second one is the gluing
technique introduced by Corvino and Schoen \cite{CorvinoSchoen}.

The conformal method and the closely related conformal thin sandwich
method were historically the first methods introduced. The conformal
method will be described in Section \ref{secConformal}. We also refer
the reader to the very nice work of Maxwell giving a geometric
interpretation of this method
\cite{MaxwellConformalMethod, MaxwellInitialData, MaxwellKasner}.
The most stricking result of the conformal method is certainly the
classification of the set of vacuum ($T \equiv 0$) constant mean
curvature (CMC) solutions to the constraint equations achieved in 1995 by
Isenberg in \cite{Isenberg} and relies on the solution of the Yamabe
problem, see e.g. \cite{LeeParker}.

The near-CMC case was addressed soon after. We refer the reader to
\cite{BartnikIsenberg} for references.

The far-from-CMC case appears however much more difficult to tackle.
It was only in 2008 that Holst, Nagy and Tsogtgerel found a method
to construct solutions to the equations of the conformal method with
arbitrarily prescribed mean curvature. See \cite{HNT1, HNT2}. The
method was then extended by Maxwell \cite{MaxwellNonCMC} to the vacuum
case. Extension to the asymptotically Euclidean case was proven in
\cite{DiltsIsenbergMazzeoMeier}.
Another point of view on this method is given by the first author and Ngo
in \cite{GicquaudNgo}. While the result of \cite{GicquaudNgo} is weaker
from a mathematical point of view than the one in \cite{MaxwellNonCMC},
the proof appears to be constructive.

The difficulty of the equations of the conformal method in the
far-from-CMC case lies in the difficulty to obtain an a priori estimate
on the solutions to the equations there is to solve. The method
designed by Dahl, Humbert and the first author in \cite{DahlGicquaudHumbert}
can be understood as a criterion for the existence of such an a priori
estimate. This method turns out to be particularly efficient with
negatively (Ricci) curved metrics, see \cite{GicquaudSakovich}.
See also \cite{DiltsLeach} for the asymptotically cylindrical case.

Simplified proofs of the results in \cite{Maxwell,DahlGicquaudHumbert}
are given by the second author in \cite{Nguyen}. A comparative point on
view of both methods is given in \cite{GicquaudNgo}.

Introducing non-gravitational fields in the constraint equations
usually leads to not so much more difficult equations to solve for the
conformal method because the terms that appear are non critical. There is
however an important counterexample to this which is scalar fields. The
potential of the field (which also encodes the cosmological constant)
can change the sign of one of the dominant terms in the Lichnerowicz
equation, see \eqref{eqLichnerowicz}.
In particular, the method of \cite{DahlGicquaudHumbert} cannot work any
longer and the Lichnerowicz equation may admits multiple solutions,
see e.g. \cite{ChruscielGicquaud}, \cite{PremoselliMultiplicity} and
references therein.

The CMC case for the Einstein-scalar field constraint equations was
studied in \cite{CBIP072} and \cite{HebeyPacardPollack}. The near CMC
case was addressed by Premoselli in \cite{Premoselli}. We would also like
to refer the reader to \cite{CBIP06} and \cite{Sakovich} for similar
results in the non-compact cases.\\

Based on the ideas developed in \cite{GicquaudNgo},\cite{Nguyen} and
\cite{Premoselli}, we show that the method of Holst et al. can be
extended to the Einstein-scalar field constraint equations.\\
The outline of the article is as follows. In Section \ref{secPrelim}
we given the constraint equations with a scalar field and we introduce
the conformal method in this context. We then present the extension of
the result of \cite{GicquaudNgo} in Section \ref{secGicquaudNgo}, see
Theorem \ref{thmGicquaudNgo}. Finally, we address the much more difficult
extension of the method of \cite{HNT2} in Section \ref{secSmallTT}, see
Theorem \ref{thmHolst}.
In the course of the proof, we prove Theorem \ref{thmLich} which shows
existence of solutions to the Lichnerowicz equation in our context. 

The outline of the article is as follows. In Section \ref{secPrelim}
we give the constraint equations with a scalar field and we introduce
the conformal method in this context. We then present the extension of
the result of \cite{GicquaudNgo} in Section \ref{secGicquaudNgo}, see
Theorem \ref{thmGicquaudNgo}. Finally, we address the much more difficult
extension of the method of \cite{HNT2} in Section \ref{secSmallTT}, see
Theorem \ref{thmHolst}.
In the course of the proof, we prove Theorem \ref{thmLich} which shows
existence of solutions to the Lichnerowicz equation in our context.\\

\noindent\textbf{Acknowledgements:} The authors are grateful to Emmanuel
Humbert for his support and his careful proofreading of a preliminary
version of this article. The authors also warmly thank Laurent V\'eron
for useful discussions.

\section{Preliminaries}\label{secPrelim}

\subsection{The constraint equations}\label{secConstraints}

We first recall the derivation of the constraint equations for the
Einstein equations with a scalar field $\Psi$, refering to
\cite{BartnikIsenberg,ChoquetBruhat,CBIP072} for further information.
The stress-energy tensor of $\Psi$ reads
\[
 T_{\mu\nu} = \nabla_\mu \Psi \nabla_\nu \Psi
  - \left(\frac{1}{2} \left|d \Psi\right|^2_h + V(\Psi)\right) h_{\mu\nu},
\]
where $h$ denotes the space-time metric and $V$ is the potential of
the scalar field. The Einstein-scalar field equations for $h$ and $\phi$
are then
\begin{equation}\label{eqEinstein}
 \left\lbrace
 \begin{aligned}
   \ric^{h}_{\mu\nu} - \frac{1}{2} \scal^{h} h_{\mu\nu}
     & = T_{\mu\nu},\\
   \square_h \Psi & = V'(\Psi).
 \end{aligned}
 \right.
\end{equation}

Let $M$ be a Cauchy surface in the space-time $(\cM, h)$ and let $\nu$
denotes its unit future-pointing timelike normal. If $\ghat$ is the
metric induced by $h$ on $M$ and if $\Khat(X, Y) = h(X, \nabla_{Y} \nu)$
is the second fundamental form of $M$, then contracting the first
equation in \eqref{eqEinstein} with $\nu$ twice gives the Hamiltonian
constraint:
\begin{equation}\label{eqHamiltonian}
 \scal^{\ghat} + \left(\tr_{\ghat} \Khat\right)^2 - \left|\Khat\right|^2_{\ghat}
  = \pihat^2 + \left|d \psihat\right|^2_{\ghat} + 2 V(\psi),
\end{equation}
where $\psihat = \Psi\vert_M$ is restriction of $\Psi$ to $M$ and 
$\pihat = \nabla_\nu \Psi$ is the time derivative of $\Psi$.

Contracting only once the first equation in \eqref{eqEinstein} and
restricting the remaining free index to spatial directions (i.e. tangent
to $M$), we get the momentum constraint:
\begin{equation}\label{eqMomentum}
 \divg^{\ghat} \Khat - d (\tr_{\ghat} \Khat) = \pihat d\psihat.
\end{equation}

Hence, solving the constraint equations \eqref{eqHamiltonian}-
\eqref{eqMomentum} appears to be a necessary condition if one hopes to
solve the Einstein equations. Conversly, the celebrated work of
Choquet-Bruhat \cite{CB1} and subsequently of Choquet-Bruhat and Geroch
\cite{CB2} ensure that any 5-tuple $(M, \ghat, \Khat, \psi, \pihat)$
satisfying \eqref{eqHamiltonian}-\eqref{eqMomentum} yields a unique
solution of the Einstein-scalar field equations \eqref{eqEinstein},
see also \cite{CBIP07}. We
refer the reader to \cite{ChoquetBruhat} and
\cite{RingstromCauchyProblem} for a comprehensive introduction to the
Cauchy problem in general relativity.

\subsection{The conformal method}\label{secConformal}

We assume from now on that the manifold $M$ is a given closed manifold of
dimension $n$. Counting the degrees of freedom of the 4-tuple
$(\ghat, \Khat, \psi, \pihat)$ and comparing it to the number of equations
provided by \eqref{eqHamiltonian} and \eqref{eqMomentum}, we immediately
see that the constraint equations form a (very) underdetermined system. As
usual in treating such type of problems, we decompose the variables
$(\ghat, \Khat, \psihat, \pihat)$ into given data and unknowns that have
to be adjusted to fulfill the constraint equations.

The splitting we will use together with the regularity we will assume are
the following (here $p > n$ is given):

\begin{itemize}
 \item \textbf{Given (seed) data}:
  \begin{itemize}
   \item A (background) metric $g \in W^{2, \frac{p}{2}}$,
   \item A function $\tau: M \to \bR$, $\tau \in W^{1, p}$,
   \item Two functions $\psi \in W^{1, p}$ and $\pi \in L^p$,
   \item A symmetric traceless and divergence free 2-tensor
    $\sigma \in W^{1, p}$,
  \end{itemize}
 \item \textbf{Unknowns}:
  \begin{itemize}
   \item A positive function $\phi \in W^{2, \frac{p}{2}}$,
   \item A 1-form $W \in W^{2, \frac{p}{2}}$.
  \end{itemize}
\end{itemize}

From these data, we cook up the initial data as follows:

\[
 \ghat   = \phi^{N-2} g,\quad
 \Khat   = \frac{\tau}{n} \phi^{N-2} g + \phi^{-2} (\sigma + \bL W),\quad
 \psihat = \psi,\quad
 \pihat  = \phi^{-N} \pi.
\]

We have used the following notations: $N \definedas \frac{2n}{n-2}$ and $\bL$
is the conformal Killing operator acting on 1-forms, namely, in coordinates
\[
 \bL W_{ij} = \nabla_i W_j + \nabla_j W_i - \frac{2}{n} \nabla^k W_k g_{ij},
\]
where $\nabla$ is the Levi-Civita connection associated to the metric $g$.

The constraint equations \eqref{eqHamiltonian}-\eqref{eqMomentum} can be
rewritten in terms of these new variables:

\begin{subequations}\label{eqConformalConstraints}
\begin{align}
 \label{eqLichnerowicz}
 \frac{4(n-1)}{n-2} \Delta \phi + \cR_\psi \phi
  & = \cB_{\tau, \psi} \phi^{N-1}
    + \frac{\left|\sigma + \bL W\right|^2 + \pi^2}{\phi^{N+1}},\\
 \label{eqVector}
 - \frac{1}{2} \bL^* \bL W & = \frac{n-1}{n} \phi^N d\tau - \pi d\psi.
\end{align}
\end{subequations}

Equation \eqref{eqLichnerowicz} is usually named the Lichnerowicz
equation, while Equation \eqref{eqMomentum} is usually refered to as
the vector equation. Our convention for the Laplacian is
\[
 \Delta \phi \definedas - g^{ij} \nabla_i \nabla_j \phi,
\]
and the operator appearing in \eqref{eqVector} is the vector Laplacian:
\[
 - \frac{1}{2} \bL^* \bL W_j = \nabla^i \bL W_{ij}
\]

The functions $\cR_\psi$ and $\cB_{\tau, \psi}$ that appear in the
Lichnerowicz equation are given by:

\[
 \cR_\psi = \scal^g - \left|d\psi\right|^2_g, \quad
 \cB_{\tau, \psi} = - \frac{n-1}{n} \tau^2 + 2 V(\psi).
\]

Compared to the vacuum case (i.e. $\psi, \pi \equiv 0$), the coefficient
$\cB_{\tau, \psi}$ can have arbitrary sign. Also, even if the
metric $g$ has positive Yamabe invariant, the generalized conformal
Laplacian
\begin{equation}\label{eqModifiedConformalLaplacian}
 L_{g,\psi}: \phi \mapsto \frac{4(n-1)}{n-2} \Delta \phi + \cR_\psi \phi
\end{equation}
is not necessarily a coercive operator, meaning that there may not
exists a constant $c > 0$ such that
\[
 \forall \phi \in W^{1, 2}, \int_M \phi L_{g,\psi} \phi d\mu^g \geq c \left\|\phi\right\|_{W^{1, 2}}^2.
\]
This assumption however will turn out to be very important in our analysis
and plays a role analog to the assumption that the metric $g$ has positive
Yamabe invariant in \cite{HNT1, HNT2, MaxwellNonCMC}. Another important
assumption we will need is that $(M, g)$ has no non-zero conformal Killing
vector field. This assumption is generically true, see \cite{BCS05}.

\section{An implicit function argument}\label{secGicquaudNgo}

In this section, we show that the method introduced in \cite{GicquaudNgo}
can be straightforwardly generalized to the system
\eqref{eqConformalConstraints}.

\begin{theorem}\label{thmGicquaudNgo}
 Let $(M, g)$ a closed Riemannian manifold, $\tau \in W^{1, p}$,
 $\psi \in W^{1, p}$, $\pitil \in L^p$ and $\sigmatil \in L^p$ be
 given. Assume further that the operator $L_{g, \psi}$ defined in
 \eqref{eqModifiedConformalLaplacian} is coercive and that $(M, g)$ has
 no non-zero conformal Killing vector field. There exists an
 $\epsilon_0 > 0$ such that for all $\epsilon \in (0, \epsilon_0)$,
 the system \eqref{eqConformalConstraints} with
 \[
 \sigma \equiv \epsilon \sigmatil, \quad \pi \equiv \epsilon \pi
 \]
 has a solution $(\phi, W) \in W^{2, \frac{p}{2}}\times W^{2, \frac{p}{2}}$ with $\phi > 0$.
\end{theorem}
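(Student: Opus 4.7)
The plan is to apply the implicit function theorem in Banach spaces, treating $\epsilon$ as the small parameter. Define a map $F$ on an open subset $\mathcal{U} \subset W^{2, p/2}(M) \times W^{2, p/2}(T^*M)$ of pairs $(\phi, W)$ with $\phi > 0$, times $\bR$, taking values in $L^{p/2}(M) \times L^{p/2}(T^*M)$, by
\[
  F(\phi, W, \epsilon) = \begin{pmatrix} L_{g, \psi}\phi - \cB_{\tau, \psi} \phi^{N-1} - \phi^{-N-1}\bigl(|\epsilon \sigmatil + \bL W|^2 + \epsilon^2 \pitil^2\bigr) \\[4pt] -\tfrac{1}{2}\bL^*\bL W - \tfrac{n-1}{n}\phi^N d\tau + \epsilon \pitil d\psi \end{pmatrix},
\]
so that zeros of $F(\cdot, \cdot, \epsilon)$ are exactly the solutions of \eqref{eqConformalConstraints} with $\sigma = \epsilon \sigmatil$ and $\pi = \epsilon \pitil$.

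First I would construct a reference solution $(\phi_0, W_0)$ at $\epsilon = 0$. The absence of non-zero conformal Killing vector fields makes $-\tfrac{1}{2}\bL^*\bL$ an isomorphism on the relevant Sobolev spaces, so for any positive $\phi$ the vector equation uniquely defines $W = W(\phi)$, depending smoothly on $\phi$. Substituting this back into the Lichnerowicz equation reduces the $\epsilon = 0$ system to a single non-local elliptic PDE for $\phi$ alone. Coercivity of $L_{g, \psi}$, non-negativity of the $\phi^{-N-1}|\bL W(\phi)|^2$ contribution, and a variational or sub- and super-solution argument in the spirit of Maxwell \cite{MaxwellConformalMethod, MaxwellNonCMC} then yield a positive solution $\phi_0 \in W^{2, p/2}$, after which we set $W_0 = W(\phi_0)$ so that $F(\phi_0, W_0, 0) = 0$.

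Next I would verify that $A \definedas D_{(\phi, W)} F(\phi_0, W_0, 0)$ is an isomorphism. The $W$-diagonal block is $-\tfrac{1}{2}\bL^*\bL$, which is invertible by the no-CKVF assumption. The $\phi$-diagonal block is
\[
  L_{g, \psi} - (N-1)\cB_{\tau, \psi} \phi_0^{N-2} + (N+1) \phi_0^{-N-2}|\bL W_0|^2,
\]
a Fredholm operator of index zero; a Schur-complement reduction, accounting for the off-diagonal couplings through $d\tau$ and $\bL W_0$, reduces its injectivity to coercivity of $L_{g, \psi}$, the positivity of the $|\bL W_0|^2$-term, and careful control of the sign-indefinite $\cB_{\tau, \psi}$-term. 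Once $A$ is invertible, the IFT produces $\epsilon_0 > 0$ and a continuous curve $\epsilon \mapsto (\phi_\epsilon, W_\epsilon)$ of solutions. Because $p > n$ the embedding $W^{2, p/2} \hookrightarrow C^0$ holds, so $\phi_\epsilon \to \phi_0$ uniformly on the compact $M$ and the strict positivity $\phi_\epsilon > 0$ persists after possibly shrinking $\epsilon_0$.

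The hard part, I expect, is the combination of Step 1 and the injectivity verification in Step 2: both rely essentially on the coercivity of $L_{g, \psi}$ --- the analog in this scalar-field setting of the positive Yamabe hypothesis used in \cite{HNT1, HNT2, MaxwellNonCMC} --- and on controlling the new, sign-indefinite potential $\cB_{\tau, \psi}$ produced by the scalar field $\psi$, which has no counterpart in the vacuum argument of \cite{GicquaudNgo} and is precisely what prevents a direct transcription of that proof.
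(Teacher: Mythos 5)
Your plan hinges on producing a reference solution $(\phi_0, W_0)$ at $\epsilon = 0$, and this is where it breaks down. When $\epsilon = 0$ the system you write is
\[
  L_{g,\psi}\phi = \cB_{\tau,\psi}\phi^{N-1} + \frac{|\bL W|^2}{\phi^{N+1}},\qquad
  -\tfrac12 \bL^*\bL W = \tfrac{n-1}{n}\phi^N d\tau,
\]
which is exactly the full far-from-CMC system with $\sigma = \pi = 0$ but with the two hard terms $\cB_{\tau,\psi}\phi^{N-1}$ and $\phi^N d\tau$ still present at full strength. This system does not in general possess a positive solution. Take $d\tau \equiv 0$ (CMC), so $W = 0$, and take $\cB_{\tau,\psi} < 0$, which happens e.g.\ in the vacuum limit $V \equiv 0$, $\tau \not\equiv 0$ (there $\cB_{\tau,\psi} = -\frac{n-1}{n}\tau^2$). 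Then multiplying by $\phi$ and integrating gives $0 < \int_M \phi L_{g,\psi}\phi\, d\mu^g = \int_M \cB_{\tau,\psi}\phi^N\, d\mu^g < 0$ for any positive $\phi$, a contradiction; the only solution is $\phi \equiv 0$, which lies on the boundary of your admissible set. The "non-negativity of the $|\bL W(\phi)|^2$ contribution" cannot rescue this, because $W(\phi)$ itself collapses as $\phi \to 0$. Even in sign regimes where a positive $\phi_0$ does exist (a Yamabe-type solution when $\cB_{\tau,\psi} > 0$), your Step~2 then faces the sign-indefinite term $-(N-1)\cB_{\tau,\psi}\phi_0^{N-2}$ in the linearization, and testing that linearization against $\phi_0$ itself yields a negative quantity, so coercivity of $L_{g,\psi}$ does not help you rule out a kernel; this is the very obstruction the far-from-CMC literature is built around, and you do not resolve it.

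The paper avoids both problems by not treating $\epsilon$ (the size of $\sigma,\pi$) as the homotopy parameter directly. Instead it introduces a parameter $\lambda$ that scales the $\cB_{\tau,\psi}$-term by $\lambda^2$ and the $d\tau$-term by $\lambda$, while keeping $\sigmatil,\pitil$ fixed, yielding the family \eqref{eqConformalConstraints1}. At $\lambda = 0$ the vector equation decouples from $\phi$ (giving $\Wtil_0$ outright), and the Lichnerowicz equation loses the $\cB$-term entirely; the remaining equation has a monotone nonlinearity and a strictly convex energy, so it has a unique positive solution $\phitil_0$. The linearization at $(\phitil_0, \Wtil_0)$ is block upper triangular with the $\phi$-block $L_{g,\psi} + (N+1)\Atil/\phitil_0^{N+2}$, which is manifestly coercive since $\Atil \geq 0$; no $\cB$-term appears. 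The implicit function theorem then continues this to $\lambda \in [0,\epsilon)$, and the final rescaling $\phi_\lambda = \lambda^{2/(N-2)}\phitil_\lambda$, $W_\lambda = \lambda^{(N+2)/(N-2)}\Wtil_\lambda$ converts the $\lambda$-family into genuine solutions of \eqref{eqConformalConstraints} with $\sigma_\lambda = \lambda^{(N+2)/(N-2)}\sigmatil$, $\pi_\lambda = \lambda^{(N+2)/(N-2)}\pitil$. This rescaling is the idea you are missing: it is what reconciles the statement (which talks about small $\sigma,\pi$) with a homotopy whose $\lambda=0$ endpoint is actually solvable. Your direct $\epsilon \to 0$ limit corresponds, after undoing the change of variables, to $\phi \to 0$, which is why you cannot find a legitimate base point to which the implicit function theorem would apply.
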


As in the article \cite{GicquaudNgo}, we divide the proof into several
steps:

\setcounter{step}{-1}
\begin{step}\label{stVector0}
 There exists a unique solution $\Wtil_0 \in W^{2, \frac{p}{2}}$ to
 \begin{equation}\label{eqVector0}
  - \frac{1}{2} \bL^* \bL W = - \pitil d\psi.
 \end{equation}
\end{step}

\begin{proof} The argument is standard, see e.g.
\cite[Proposition 5]{MaxwellNonCMC}.
Note that $\pitil d\psi \in L^{\frac{p}{2}}$. The operator
\[
 -\frac{1}{2} \bL^* \bL: W^{2, \frac{p}{2}} \to L^{\frac{p}{2}}
\]
is Fredholm with zero index. Its kernel is, by a simple integration by
parts argument, the set of conformal Killing vector fields which is
reduced to $\{0\}$ by assumption. Hence $-\frac{1}{2} \bL^* \bL$ is an
isomorphism.
\end{proof}

\begin{step}\label{stLichnerowicz0}
 There exists a unique solution $\phitil_0 \in W^{2, \frac{p}{2}}$ to the
 following equation:
 \begin{equation}\label{eqLichnerowicz0}
  \frac{4(n-1)}{n-2} \Delta \phi + \cR_\psi \phi = \frac{\left|\sigmatil + \bL \Wtil_0\right|^2 + \pitil^2}{\phi^{N+1}},\\
 \end{equation}
\end{step}

\begin{proof}
 We set
 \[
  \Atil \definedas \left|\sigmatil + \bL \Wtil_0\right|^2 + \pitil^2
 \]
 for convenience. Since $\Wtil_0 \in W^{2, \frac{p}{2}}$,
 $\bL \Wtil_0 \in W^{1, \frac{p}{2}} \into L^p$. Indeed, from the Sobolev
 injection, $W^{1, \frac{p}{2}} \into L^p$, where
 \[
  q = \frac{np}{2n-p} > p
 \]
 (here we assumed that $p < 2n$). It follows that $\Atil \in L^{\frac{p}{2}}$.
 We first prove that there exists a unique positive solution $\phibar$ to
 \begin{equation}\label{eqLichnerowiczSuper}
  \frac{4(n-1)}{n-2} \Delta \phi + \cR_\psi \phi = \Atil.
 \end{equation}
 We remark that, integrating the righthand side, we get
 \begin{align*}
  \int_M \Atil d\mu^g
    & = \int_M \left(\left|\sigmatil\right|^2 + \pitil^2\right) d\mu^g  + \int_M \left|\bL W\right|^2 d\mu^g\\
    & \geq \int_M \left(\left|\sigmatil\right|^2 + \pitil^2\right) d\mu^g\\
    & > 0.
 \end{align*}
 We rely on the Lax-Milgram theorem. Since $L_{g, \psi}$ is coercive, there
 exists a unique weak solution $\phibar$ to \eqref{eqLichnerowiczSuper}
 which is uniquely characterized by
 \[
  \int_M \left(\frac{2(n-1)}{n-2} \left|d\phibar\right|^2 + \frac{\cR_\psi}{2} \phibar^2 - \Atil \phibar\right) d\mu^g
    = \min_{\phi \in W^{1, 2}} F(\phi),
 \]
 where
 \[
  F(\phi) \definedas \int_M \left(\frac{2(n-1)}{n-2} \left|d\phi\right|^2 + \frac{\cR_\psi}{2} \phi^2 - \Atil \phi\right) d\mu^g.
 \]
 Since $\Atil \geq 0$, we have $F(|\phi|) \leq F(\phi)$ for any
 $\phi \in W^{1, 2}$. As a consequence $\phibar$ being the unique
 minimizer of $F$, $\phibar \geq 0$. By elliptic regularity, we have
 that $\phibar \in W^{2, \frac{p}{2}}$. In particular, $\phibar$ is continuous.
 It can be argued by contradiction that $\phibar > 0$. Indeed, if the
 set $\Omega = \{\phibar = 0\}$ was not empty, it would follows from
 the Harnack inequality we borrow from
 \cite[Theorem 9.22]{GilbargTrudinger} applied to $u = \phibar$ and
 $f \equiv 0$ in a ball $B_R$ centered at a boundary point of $\Omega$
 that $\phibar \equiv 0$ on $B_R$ which is a contradiction.

 Setting $a \definedas \min_M \phibar$, $b \definedas \max_M \phibar$,
 one can readily check that the function
 \[
  \phibar_+ \definedas a^{-\frac{N+1}{N}} \phibar, \text{ resp . } \phibar_- \definedas b^{-\frac{N+1}{N}} \phibar,
 \]
 is a supersolution (resp. a subsolution) for Equation
 \eqref{eqLichnerowicz0}. Existence of a solution to
 \eqref{eqLichnerowicz0} follows then from the standard sub- and
 supersolution method, see e.g. \cite[Lemma 3.4]{GicquaudHuneau} or
 \cite{MaxwellRoughCompact}. Uniqueness of $\phitil_0$ is also classical,
 see \cite{DahlGicquaudHumbert}. However, here we can simply remark that
 the functional
 \[
  G(\phi) \definedas \int_M \left(\frac{2(n-1)}{n-2} \left|d\phi\right|^2 + \frac{\cR_\psi}{2} \phi^2
   + \frac{1}{N} \frac{\Atil}{\phi^N}\right) d\mu^g
 \]
 is strictly convex on the set of positive $H^1$-functions (i.e.
 so that there exists $\epsilon > 0$ such that $\phi \geq \epsilon$ a.e.)
 Its critical points being exactly the solutions to
 \eqref{eqLichnerowicz0}, we conclude that the solution to
 \eqref{eqLichnerowicz0} is unique. This idea will be developed further
 in Section \ref{secLichnerowicz}.
\end{proof}

\begin{step}\label{stDeformation}
 There exists $\epsilon > 0$ and a $C^1$-map
 \[
  \begin{array}{ccc}
   [0, \epsilon) & \to & W^{2, \frac{p}{2}} \times W^{2, \frac{p}{2}}\\
   \lambda & \mapsto & (\phitil_\lambda, \Wtil_\lambda)
  \end{array}
 \]
 such that
 \begin{itemize}
  \item $\phitil_\lambda$ and $\Wtil_\lambda$ solve
   \begin{subequations}\label{eqConformalConstraints1}
    \begin{align}
     \label{eqLichnerowicz1}
     \frac{4(n-1)}{n-2} \Delta \phitil_\lambda + \cR_\psi \phitil_\lambda
      & = \lambda^2 \cB_{\tau, \psi} \phitil_\lambda^{N-1} + \frac{\left|\sigmatil + \bL \Wtil_\lambda\right|^2 + \pitil^2}{\phitil_\lambda^{N+1}},\\
     \label{eqVector1}
     - \frac{1}{2} \bL^* \bL \Wtil_\lambda & = \frac{n-1}{n} \lambda \phitil_\lambda^N d\tau - \pitil d\psi.
    \end{align}
   \end{subequations}
  \item $\phitil_\lambda \to \phitil_0$ and $\Wtil_\lambda \to \Wtil_0$
   when $\lambda \to 0$, where $\Wtil_0$ and $\phitil_0$ are as
   defined in Steps \ref{stVector0} and \ref{stLichnerowicz0}.
 \end{itemize}
\end{step}

Note that Equations \eqref{eqConformalConstraints1} interpolate between
the original conformal constraint equations \eqref{eqConformalConstraints}
when $\lambda = 1$ and Equations \eqref{eqLichnerowicz0}-\eqref{eqVector0}
when $\lambda = 0$.

\begin{proof}
The proof is via the implicit function theorem. Let
$\Phi: \bR \times W^{2, \frac{p}{2}} \times W^{2, \frac{p}{2}} \to L^\frac{p}{2} \times L^\frac{p}{2}$ be the
following operator:

\[
 \Phi_\lambda \begin{pmatrix} \phitil\\ \Wtil\end{pmatrix}
  \mapsto
 \begin{pmatrix}
  \frac{4(n-1)}{n-2} \Delta \phitil + \cR_\psi \phitil
     - \lambda^2 \cB_{\tau, \psi} \phitil^{N-1} - \frac{\Atil}{\phitil^{N+1}}\\
  - \frac{1}{2} \bL^* \bL \Wtil - \frac{n-1}{n} \lambda \phitil^N d\tau + \pitil d\psi
 \end{pmatrix}.
\]

Its differential with respect to the variables $(\phitil, \Wtil)$ at
$(\lambda = 0, \phitil_0, \Wtil_0)$ is given by the following block
upper triangular matrix:

\[
 D\Phi_{\lambda=0}(\phitil_0, \Wtil_0)
  =
  \begin{pmatrix}
   \frac{4(n-1)}{n-2} \Delta + \cR_\psi + (N+1) \frac{\left|\sigmatil + \bL \Wtil_0\right|^2 + \pitil^2}{\phitil_0^{N+2}}
     & - \frac{2}{\phitil_0^{N+1}} \left\<\sigmatil + \bL \Wtil_0, \bL \cdot\right\>\\
   0 & - \frac{1}{2} \bL^* \bL \cdot
  \end{pmatrix}.
\]

Each diagonal block is Fredholm with zero index and has, by assumption,
a trivial kernel. This proves that $D\Phi_{\lambda=0}(\phitil_0, \Wtil_0)$
is invertible. The existence of the curve of solutions to
\eqref{eqConformalConstraints1} on some interval $[0, \epsilon)$
is then guaranteed by the implicit function theorem.
\end{proof}

The last step is a straightforward calculation.

\begin{step}\label{stRescaling}
 Let $(\phitil_\lambda, \Wtil_\lambda)$ be as in Step \ref{stDeformation}.
 Setting
 \[
  \left\lbrace
   \begin{aligned}
     \phi_\lambda & \definedas \lambda^{\frac{2}{N-2}} \phitil_\lambda,\\
     W_\lambda & \definedas \lambda^{\frac{N+2}{N-2}} \Wtil_\lambda,\\
     \sigma_\lambda & \definedas \lambda^{\frac{N+2}{N-2}} \sigmatil,\\
     \pi_\lambda & \definedas \lambda^{\frac{N+2}{N-2}} \pitil,
   \end{aligned}
  \right.
 \]
 then
 $(\phi_\lambda, W_\lambda)$ solves the system
 \eqref{eqConformalConstraints} with $\sigma = \sigma_\lambda$ and
 $\pi = \pi_\lambda$.
\end{step}

The proof of Theorem \ref{thmGicquaudNgo} follows by setting
$\epsilon_0 = \lambda_0^{\frac{N+2}{N-2}}$ and
$\epsilon = \lambda^{\frac{N+2}{N-2}}$.

\section{An existence result for \texorpdfstring{$\sigma$}{TEXT} and
\texorpdfstring{$\pi$}{TEXT} small in \texorpdfstring{$L^2$}{TEXT}}
\label{secSmallTT}

In this section, we adapt the method of \cite{HNT1, HNT2, MaxwellNonCMC}
to our context. The first step is to prove an existence result for
solutions to the Lichnerowicz equation. Very nice existence results for
solutions to the Lichnerowicz equation are given in \cite{HebeyPacardPollack},
\cite{Premoselli, PremoselliMultiplicity} and \cite{HebeyVeronelli}. We
prove here an existence result suited to our applications. See Theorem
\ref{thmLich}. We then study the full system \eqref{eqConformalConstraints}
and obtain the following theorem:

\begin{theorem}\label{thmHolst}
 Under the regularity assumptions stated in Section \ref{secConformal},
 and assuming that the operator $L_{g, \psi}$ is coercive and that
 $(M, g)$ has no conformal Killing vector fields, 
 the system \eqref{eqConformalConstraints} admits at least one solution
 $(\phi, W) \in W^{2, \frac{p}{2}} \times W^{2, \frac{p}{2}}$ provided that
 \[
  \int \left(|\sigma|^2 + \pi^2\right) d\mu^g
 \]
 is less than some small constant (depending on the seed data).
\end{theorem}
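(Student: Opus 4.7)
The plan is to solve \eqref{eqConformalConstraints} by a Schauder fixed point argument on the vector equation, following the strategy of \cite{HNT1, HNT2, MaxwellNonCMC}, with the Lichnerowicz step handled by the (forthcoming) Theorem \ref{thmLich}. Given a $1$-form $W$ in a closed ball $\mathcal{C} = \{W \in W^{2, p/2} : \|W\|_{W^{2, p/2}} \leq R\}$, Theorem \ref{thmLich} will provide a unique positive solution $\phi_W \in W^{2, p/2}$ of the Lichnerowicz equation \eqref{eqLichnerowicz} with $\sigma + \bL W$ in the role of the transverse-traceless tensor. I then define $T(W) = W'$ as the unique solution of
$$-\tfrac{1}{2} \bL^* \bL W' = \tfrac{n-1}{n} \phi_W^N \, d\tau - \pi \, d\psi,$$
which exists and lies in $W^{2, p/2}$ because $-\tfrac{1}{2}\bL^*\bL$ is an isomorphism from $W^{2, p/2}$ to $L^{p/2}$ (Step \ref{stVector0}). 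A fixed point of $T$ yields a solution of the full system.

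To apply Schauder's theorem, I must check three properties: (i) $T(\mathcal{C}) \subseteq \mathcal{C}$; (ii) $T$ is continuous; (iii) $T$ is compact. Continuity will follow from the uniqueness part of Theorem \ref{thmLich} and stability of the Lichnerowicz solution under perturbations of the source term, controlling $|\sigma + \bL W|^2$ in $L^{p/2}$ via the Sobolev embedding $W^{1, p/2} \into L^p$ already exploited in Step \ref{stLichnerowicz0}. Compactness will come from the compact embedding $W^{2, p/2} \into W^{1, q}$ for $q < \tfrac{np}{2n-p}$, continuity of the map $\phi \mapsto \phi^N \, d\tau$ into $L^{p/2}$, and the smoothing provided by the vector Laplacian.

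The heart of the argument, and where the smallness of $\int_M(|\sigma|^2 + \pi^2) \dv$ is used, is property (i): establishing a uniform bound $\|\phi_W\|_{\infty} \leq C$ independent of $W \in \mathcal{C}$. Indeed, once such a bound holds,
$$\|W'\|_{W^{2, p/2}} \leq C_0 \bigl( \|\phi_W\|_{\infty}^N \|d\tau\|_{L^{p/2}} + \|\pi\|_{L^p} \|d\psi\|_{L^p} \bigr),$$
and choosing $R$ sufficiently large (depending on the seed data) ensures $\|W'\|_{W^{2, p/2}} \leq R$. To secure the $L^\infty$ bound, the plan is to construct a global supersolution $\phi_+$ of \eqref{eqLichnerowicz} that works simultaneously for every $W \in \mathcal{C}$ and is independent of $W$. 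In the vacuum setting of \cite{HNT2, MaxwellNonCMC} with positive Yamabe invariant, $\phi_+$ is essentially built from the Yamabe minimizer; here the sign-indefinite critical term $\cB_{\tau, \psi} \phi^{N-1}$ precludes a direct transcription. Instead, the coercivity of $L_{g, \psi}$ plays the role of the positive Yamabe invariant, and the positive part $\cB_{\tau, \psi}^{+} \phi^{N-1}$ must be absorbed against the leading operator through a Moser iteration (or De Giorgi bootstrap) fed by the smallness of $\|\sigma + \bL W\|_{L^2}^2 + \|\pi\|_{L^2}^2$. Because the $L^2$ smallness is weaker than the $L^\infty$ smallness used in prior work, this step is the main technical obstacle and requires the most careful adaptation.

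Assuming the uniform $L^\infty$ bound on $\phi_W$ is in hand, a simultaneous uniform lower bound is obtained in the same fashion using a global subsolution (say, a small constant multiple of an appropriate eigenfunction), which in turn suffices to control the singular term $|\sigma + \bL W|^2/\phi_W^{N+1}$ in the stability argument for (ii). Putting the three ingredients together, Schauder's theorem applies and yields a fixed point $W_\infty \in \mathcal{C}$ with $T(W_\infty) = W_\infty$; the pair $(\phi_{W_\infty}, W_\infty) \in W^{2, p/2} \times W^{2, p/2}$ is then the desired solution of \eqref{eqConformalConstraints}, completing the proof of Theorem \ref{thmHolst}.
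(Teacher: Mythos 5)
Your overall blueprint (alternate Lichnerowicz/vector solves, then Schauder) is the right one, but the ball you propose is the wrong object, and this is not a cosmetic choice: it is exactly where the smallness hypothesis on $\int_M(|\sigma|^2+\pi^2)\,d\mu^g$ must enter, and your argument never actually uses it.

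You take $\mathcal{C}$ to be a ball in $W^{2,p/2}$ of radius $R$ ``sufficiently large,'' and you plan to prove a $W$-independent bound $\|\phi_W\|_\infty\leq C$ for all $W\in\mathcal{C}$. This cannot work. Theorem~\ref{thmLich} only produces a solution $\phi_W$ (stable, lying in the small ball $B_{R_0}\subset H^1$) under the hypothesis that $\int_M(|\sigma+\bL W|^2+\pi^2)\,d\mu^g<\mu$; so the map $W\mapsto\phi_W$ is not even defined on a large ball. Moreover, even when $\phi_W$ exists, its size is driven by $|\sigma+\bL W|^2$, so there is no $W$-independent $L^\infty$ bound as $R\to\infty$. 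The scheme of Holst--Nagy--Tsogtgerel and Maxwell, and of the paper, is precisely the opposite: one works with a \emph{small} set (in the paper, $C=\{\phi_0:\int_M\phi_0^{2N}\,d\mu^g\leq R\}$ with $R\ll1$), uses the smallness of $\int(|\sigma|^2+\pi^2)$ and the estimate $\int(|\sigma+\bL W|^2+\pi^2)\lesssim\int|\sigma|^2+\int\phi_0^{2N}+\int\pi^2$ (the paper's \eqref{eqEstimateVector}) to put $\bL W$ in the regime of Theorem~\ref{thmLich}, and then shows via the multiplier $\phi^{N+1}$ and the coercivity $\|\cdot\|_k$ that the resulting $\phi$ satisfies $\int\phi^{2N}\leq f(\int\phi_0^{2N})$ with a fixed point $R\ll1$ of $f$. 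That self-consistent smallness is the heart of the stability step, and your proposal omits it: you acknowledge that the global-supersolution trick of the vacuum case is unavailable and defer the $L^\infty$ bound to an unspecified ``Moser iteration fed by $L^2$ smallness,'' which is in fact the content of the paper's Lemma~\ref{lmL2NBound} and Proposition~\ref{propLinftyBound}, but those work only because they start from the small $L^{2N}$ invariant set, not from a large ball in $W^{2,p/2}$.

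Two secondary remarks. First, the paper iterates on $\phi_0$ rather than on $W$; your dual choice is legitimate in principle, but the invariant set must then be expressed through a smallness condition on $\bL W$ in $L^2$, not through a large $W^{2,p/2}$ bound. Second, continuity of the Lichnerowicz solution in $W$ cannot be obtained by the implicit function theorem alone here, because the admissible cone $\Omega_0=\{\phi\geq0\}$ has empty interior in $H^1$; the paper instead exploits the uniform coercivity of the Hessian of $\Ibar$ on $B_{R_0}$ (Lemma~\ref{lmIbar}) together with the lower bound $\phi\geq\eta$ from the subsolution (Lemma~\ref{lmSubSolution2}) to get a Lipschitz-type estimate $\|\phi'_i-\phi'_\infty\|_h^2\lesssim\|\,|\bL W_i|^2-|\bL W_\infty|^2\|_{L^1}$. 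Your plan to invoke ``uniqueness plus stability'' gestures at this but does not address the empty-interior obstruction.
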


\subsection{The Lichnerowicz equation}\label{secLichnerowicz}

Here and in what follows, we define the following norm. Given
$\phi \in H^1(M, g)$, we set
\[
\left\|\phi\right\|_h^2 \definedas
  \int_M \left(\frac{4(n-1)}{n-2}\left|d\phi\right|^2 + \cR_\psi \phi^2\right) d\mu^g.
\]
Since we assumed that the modified conformal Laplacian is coercive, there
exists a constant $s > 0$ such that for any $\phi \in H^1$, we have
\begin{equation}\label{eqDefNormK}
\left\|\phi\right\|^2_h \geq s \left\|\phi\right\|_{L^N}^2.
\end{equation}
The aim of this section is to prove the following theorem:

\begin{theorem}\label{thmLich}
 Assuming that $\left|\sigma + \bL W\right|^2 + \pi^2 \in L^{\frac{p}{2}}$,
 there exists a (small) constant
 $\mu = \mu(s, \left\|\cB_{\tau, \psi}\right\|_{L^\infty}) > 0$ such that if
 \[
  0 < \int_M \left(\left|\sigma + \bL W\right|^2 + \pi^2\right) d\mu^g < \mu
 \]
 the Lichnerowicz equation \eqref{eqLichnerowicz} admits a solution
 $\phi \in H^1$ which is a stable minimizer for the functional
\begin{equation}\label{eqLichnerowiczFunctional}
\begin{aligned}
I_W(\phi) & \definedas
 \frac{1}{2} \int_M \left(\frac{4(n-1)}{n-2} \left|d\phi\right|^2 + \cR_\psi \phi^2\right) d\mu^g
 - \int_M \frac{\cB_{\tau, \psi}}{N} \phi^N d\mu^g\\
 & \qquad \qquad + \int_M \frac{\left|\sigma + \bL W\right|^2 + \pi^2}{N \phi^N} d\mu^g.
\end{aligned}
\end{equation}
 and whose energy satisfies
 \[
  \left\|\phi\right\|_h^2 \leq C \left(\int_M \left(\left|\sigma + \bL W\right|^2 + \pi^2\right) d\mu^g\right)^{\frac{2}{N+2}}.
 \]
 for some constant $C = C(s, \left\|\cB_{\tau, \psi}\right\|_{L^\infty}, \mu)$.
\end{theorem}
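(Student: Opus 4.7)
The strategy is to construct $\phi$ as a minimizer of $I_W$ on a small closed $\|\cdot\|_h$-ball in $H^1$, using the direct method of the calculus of variations. The functional is \emph{not} bounded below on all of $H^1$, because the middle term $-\tfrac{1}{N}\int \cB_{\tau,\psi}\phi^N\,d\mu^g$ involves the critical Sobolev exponent $N=\tfrac{2n}{n-2}$ and $\cB_{\tau,\psi}$ can be positive; however, the singular term $\int(|\sigma+\bL W|^2+\pi^2)/(N\phi^N)\,d\mu^g$ blows up as $\phi\to 0$. Writing $\eta\definedas \int(|\sigma+\bL W|^2+\pi^2)\,d\mu^g$ and balancing the quadratic part $\tfrac12\|\phi\|_h^2$ against the singular term suggests the optimal scale $\phi\sim \eta^{1/(N+2)}$ and an optimal value $I_W\sim \eta^{2/(N+2)}$, which is exactly the rate in the energy estimate.

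First I would pick a test function: evaluating $I_W$ at the positive constant $\phi^*\equiv\epsilon$ and optimizing over $\epsilon>0$ yields, by an elementary calculation, $I_W(\phi^*)\leq C_1\eta^{2/(N+2)}$ for a constant $C_1$ depending on the seed data. Next I would choose a radius $R>0$, depending only on $s$ and $\|\cB_{\tau,\psi}\|_\infty$, so small that
\[
 \frac{\|\cB_{\tau,\psi}\|_\infty}{N\,s^{N/2}}\,R^{N-2}\leq \frac{1}{4}.
\]
Combined with the coercivity estimate \eqref{eqDefNormK}, this gives $I_W(\phi)\geq \tfrac14 \|\phi\|_h^2$ for every nonnegative $\phi\in H^1$ with $\|\phi\|_h\leq R$, after dropping the nonnegative singular term. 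Taking $\mu$ small enough that $4C_1\mu^{2/(N+2)}<R^2/4$ guarantees that the minimum of $I_W$ over the convex set $K\definedas\overline{B_R}\cap\{\phi\geq 0\}$, if it exists, is strictly less than $R^2/4$ and is therefore attained in the interior of $B_R$.

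The main obstacle is the existence of the minimizer itself, since the middle term involves the critical Sobolev exponent and $H^1\into L^N$ is not compact. For a minimizing sequence $\phi_n\in K$ I would extract a weak $H^1$-limit $\phi\geq 0$ with a.e.\ convergence along a subsequence. The $\|\cdot\|_h^2$-term is weakly lower semicontinuous as a coercive quadratic form, and Fatou's lemma handles the (nonnegative) singular term. For the critical term the key tool is the Brezis--Lieb lemma: writing $\|\phi_n\|_h^2=\|\phi\|_h^2+\|\phi_n-\phi\|_h^2+o(1)$ and $\int\cB_{\tau,\psi}\phi_n^N\,d\mu^g=\int\cB_{\tau,\psi}\phi^N\,d\mu^g+\int\cB_{\tau,\psi}|\phi_n-\phi|^N\,d\mu^g+o(1)$, the residual satisfies
\[
 \tfrac12\|\phi_n-\phi\|_h^2-\tfrac1N\int\cB_{\tau,\psi}|\phi_n-\phi|^N\,d\mu^g\geq \tfrac14\|\phi_n-\phi\|_h^2\geq 0
\]
by the choice of $R$, so that $I_W(\phi)\leq \liminf_n I_W(\phi_n)$ and $\phi$ minimizes $I_W$ over $K$.

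Finally, it remains to check that $\phi$ is positive, solves \eqref{eqLichnerowicz}, and satisfies the quantitative energy estimate. Finiteness of $I_W(\phi)$ together with $\eta>0$ forces $\phi>0$ a.e.\ on $\{|\sigma+\bL W|^2+\pi^2>0\}$; a Harnack-type argument in the spirit of Step \ref{stLichnerowicz0} then upgrades this to $\phi\geq c_0>0$ everywhere, and elliptic regularity provides enough smoothness to justify arbitrary $H^1$-variations, which remain admissible in an $L^\infty$-neighborhood of $\phi$ thanks to the uniform lower bound. The resulting Euler--Lagrange equation is precisely \eqref{eqLichnerowicz}, and the nonnegativity of the second variation follows automatically from $\phi$ being a minimizer. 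The energy bound is then obtained by chaining $\tfrac14\|\phi\|_h^2\leq I_W(\phi)\leq I_W(\phi^*)\leq C_1\eta^{2/(N+2)}$.
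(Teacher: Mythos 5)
Your high-level strategy (direct minimization of $I_W$ over a small ball in $H^1$, a constant test function to bound the infimum, Brezis--Lieb to handle the critical term, Fatou for the singular term, and chaining $\frac14\|\phi\|_h^2\leq I_W(\phi)\leq C_1\eta^{2/(N+2)}$) is attractive and genuinely different from the paper's, which never invokes Brezis--Lieb. The paper instead regularizes: it minimizes
\[
 I^\epsilon_W(\phi)=\tfrac12\|\phi\|_h^2-\tfrac1N\int_M\cB_{\tau,\psi}|\phi|^N\,d\mu^g+\tfrac1N\int_M\frac{A_W}{(\phi+\epsilon)^N}\,d\mu^g+\int_M\phi_-^N\,d\mu^g
\]
over the convex set $\Omega_\epsilon=\{\phi\geq-\epsilon/2\}\cap B_{R_0}$, proves this regularized functional is \emph{continuous and strictly convex} there (so weak lower semicontinuity is free), produces a positive sub-solution $\phi_{\mathrm{sub}}$ that is uniform in $\epsilon$, shows $\phi_\epsilon\geq\phi_{\mathrm{sub}}$, and only then passes to the limit $\epsilon\to0$.

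The genuine gap in your argument is the passage from ``$\phi$ minimizes $I_W$ over $K=\overline{B_R}\cap\{\phi\geq0\}$'' to ``$\phi$ satisfies the Euler--Lagrange equation \eqref{eqLichnerowicz}.'' The constraint set $\{\phi\geq0\}$ has empty interior in $H^1$, and minimizing there only yields a one-sided variational inequality (test directions $\xi\geq0$); even stating that inequality is delicate because the formal derivative $-\int A_W\xi/\phi^{N+1}\,d\mu^g$ need not converge, since finiteness of $I_W(\phi)$ only controls $\int A_W/\phi^N$, not $\int A_W/\phi^{N+1}$. Your proposed fix --- ``a Harnack-type argument in the spirit of Step~\ref{stLichnerowicz0} upgrades $\phi>0$ a.e.\ to $\phi\geq c_0>0$'' --- is circular: the Harnack inequality used in Step~\ref{stLichnerowicz0} is applied to a $W^{2,p/2}$ \emph{solution} of an elliptic equation, but the equation (and hence the regularity) is precisely what you do not yet have for a constrained minimizer. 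This is exactly what the paper's $\epsilon$-buffer is designed to eliminate: for the regularized functional, a minimizer $\phi_\epsilon\geq0$ sits at distance $\epsilon/2$ from the boundary of $\Omega_\epsilon$, so two-sided variations in $L^\infty$ directions are admissible, the Euler--Lagrange equation \eqref{eqLichnerowiczRegularized} holds outright, elliptic regularity and Harnack then give $\phi_\epsilon>0$, and the uniform-in-$\epsilon$ sub-solution (Lemma~\ref{lmSubSolution}) gives the quantitative lower bound that survives the limit. Unless you supply a replacement for this mechanism --- e.g.\ carefully extracting a weak supersolution inequality from the one-sided derivative and invoking the weak Harnack inequality for supersolutions, then bootstrapping regularity --- the argument does not close. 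A secondary, minor issue: your choice of $R$ should account for $\|\phi_n-\phi\|_h\leq 2R$ (not $R$), so the constant in the $R^{N-2}$ condition needs an extra factor of $2^{N-2}$; and the stability/second-variation claim at the end has the same empty-interior caveat, which the paper addresses through Lemma~\ref{lmIbar}.
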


The spirit of the proof of this theorem is different from
\cite{HebeyPacardPollack}. The point being that we want to obtain
a stable solution $\phi_0$, meaning that $\phi_0$ is a stable local
minimum for the functional $I$ defined in \eqref{eqLichnerowiczFunctional},
while \cite{HebeyPacardPollack} uses the mountain pass lemma.
Stability will ensure that the minimum $\phi_0$ varies continously
with respect to the parameters. This will turn out to be very important
when applying the Schauder fixed point theorem in Section \ref{secCoupled}.

The proof of Theorem \ref{thmLich} will be carried out in the
remaining of this section. For convenience, we denote
\[
 A_W := \int_M \left(\left|\sigma + \bL W\right|^2 + \pi^2\right) d\mu^g.
\]
We also denote by $B_{R_0}$ the ball of radius $R_0 > 0$ centered at the
origin in $H^1$ for the norm $\|\cdot\|_h$.

\begin{lemma}\label{lmIbar}
 There exists an $R_0 > 0$ depending only on $g$ and $\psi$ such that
 the functional
 \begin{equation}\label{eqIbar}
  \Ibar(\phi) \definedas
  \frac{1}{2} \int_M \left(\frac{4(n-1)}{n-2} \left|d\phi\right|^2 + \cR_\psi \phi^2\right) d\mu^g
  - \int_M \frac{\cB_{\tau, \psi}}{N} |\phi|^N d\mu^g
 \end{equation}
 has $\hess \Ibar(\phi) (u, u) \geq \frac{1}{2} \left\|u\right\|_h^2$
 for all $\phi \in B_{R_0}(0)$ and all $u \in H^1$.
\end{lemma}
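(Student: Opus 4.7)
The plan is to compute the second variation of $\Ibar$ explicitly and control the non-quadratic contribution via the critical Sobolev embedding packaged in \eqref{eqDefNormK}. Since $N = \frac{2n}{n-2} > 2$, the map $x \mapsto |x|^N$ is $C^2$ on $\bR$ with second derivative $N(N-1)|x|^{N-2}$, so for $\phi, u \in H^1$ a direct computation yields
\[
\hess \Ibar(\phi)(u,u)
 = \|u\|_h^2 - (N-1)\int_M \cB_{\tau,\psi}\,|\phi|^{N-2} u^2\, d\mu^g.
\]
The first term is exactly $\|u\|_h^2$; the second is the perturbation to be absorbed into $\tfrac12\|u\|_h^2$.

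Next I would bound the perturbation. Since $\tau \in W^{1,p}$ and $\psi \in W^{1,p}$ with $p > n$, Sobolev embedding gives $\tau, \psi \in L^\infty$, so (assuming $V$ continuous) $\cB_{\tau,\psi} \in L^\infty$. Applying Hölder with the conjugate pair $\left(\tfrac{N}{N-2}, \tfrac{N}{2}\right)$ (note $\tfrac{N-2}{N} + \tfrac{2}{N} = 1$), I get
\[
\int_M |\phi|^{N-2} u^2\, d\mu^g
 \leq \|\phi\|_{L^N}^{N-2}\, \|u\|_{L^N}^2.
\]
The coercivity estimate \eqref{eqDefNormK} then yields $\|\phi\|_{L^N}^{N-2} \leq s^{-(N-2)/2}\|\phi\|_h^{N-2}$ and $\|u\|_{L^N}^2 \leq s^{-1}\|u\|_h^2$, so combining,
\[
(N-1)\int_M \cB_{\tau,\psi}\,|\phi|^{N-2} u^2\, d\mu^g
 \leq \frac{(N-1)\|\cB_{\tau,\psi}\|_{L^\infty}}{s^{N/2}}\,\|\phi\|_h^{N-2}\,\|u\|_h^2.
\]

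Finally I would choose
\[
R_0 \definedas \left(\frac{s^{N/2}}{2(N-1)\|\cB_{\tau,\psi}\|_{L^\infty}}\right)^{\frac{1}{N-2}},
\]
(with the convention $R_0 = \infty$ if $\cB_{\tau,\psi}\equiv 0$), so that for any $\phi \in B_{R_0}$ the perturbation is at most $\tfrac12\|u\|_h^2$, and hence
\[
\hess \Ibar(\phi)(u,u) \geq \|u\|_h^2 - \tfrac12 \|u\|_h^2 = \tfrac12 \|u\|_h^2,
\]
as required. The computation is essentially routine; the only point that requires any care is keeping track of the critical Sobolev exponent so that the $L^N$ norm on both $\phi$ and $u$ can be converted into the $\|\cdot\|_h$ norm via \eqref{eqDefNormK}, and noting that the dependence of $R_0$ is on $s$ and $\|\cB_{\tau,\psi}\|_{L^\infty}$ (hence ultimately on $g, \psi, \tau$, and $V$).
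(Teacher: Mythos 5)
Your proof is correct and follows essentially the same route as the paper: compute the Hessian, bound the $\cB_{\tau,\psi}$-term by Hölder with exponents $\left(\tfrac{N}{N-2},\tfrac{N}{2}\right)$, convert to the $\|\cdot\|_h$ norm via the coercivity constant $s$, and solve for the radius; your $R_0 = \left(\tfrac{s^{N/2}}{2(N-1)\|\cB_{\tau,\psi}\|_{L^\infty}}\right)^{1/(N-2)}$ is algebraically identical to the paper's $s^{1/2}\left(\tfrac{s}{2(N-1)\|\cB_{\tau,\psi}\|_{L^\infty}}\right)^{1/(N-2)}$ in \eqref{eqDefR0}. You are also right that $R_0$ really depends on $\tau$ and $V$ through $\|\cB_{\tau,\psi}\|_{L^\infty}$, so the phrase ``depending only on $g$ and $\psi$'' in the lemma statement is an imprecision in the paper rather than a gap in your argument.
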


In particular, we have
\begin{equation}\label{eqEstimateIbar}
 \frac{1}{4} \left\|\phi\right\|_h^2 \leq \Ibar(\phi)
\end{equation}
for all $\phi \in B_{R_0}$.

\begin{proof}[Proof of Lemma \ref{lmIbar}]
The Hessian of $\Ibar$ at $\phi \in H^1$ and in the direction $u \in H^1$
is given by
\[
\hess \Ibar(\phi) (u, u) =
 \int_M \left[\frac{4(n-1)}{n-2} \left|du\right|^2 + \cR_\psi u^2
  - (N-1) \cB_{\tau, \psi} |\phi|^{N-2} u^2\right] d\mu^g.
\]
We estimate the Hessian as follows:
\begin{align*}
\hess \Ibar(\phi) (u, u)
 & \geq \int_M \left(\frac{4(n-1)}{n-2} \left|du\right|^2 + \cR_\psi u^2\right) d\mu^g
   - (N-1) \left\|\cB_{\tau, \psi}\right\|_{L^\infty} 
      \left\|\phi\right\|^{N-2}_{L^N} \left\|u\right\|^2_{L^N}\\
 & \geq \left\|u\right\|_h^2 - \frac{N-1}{s} \left\|\cB_{\tau, \psi}\right\|_{L^\infty} 
      \left\|\phi\right\|^{N-2}_{L^N} \left\|u\right\|^2_h\\
 & \geq \left(1 - \frac{N-1}{s} \left\|\cB_{\tau, \psi}\right\|_{L^\infty} 
      \left\|\phi\right\|^{N-2}_{L^N}\right) \left\|u\right\|^2_h\\
\end{align*}
Thus, if $\left\|\phi\right\|_{L^N} <
\left(\frac{s}{2(N-1) \left\|\cB_{\tau, \psi}\right\|_{L^\infty}}\right)^{\frac{1}{N-2}}$,
the Hessian of $\Ibar$ satisfies the assumptions of the lemma. From the
Sobolev embedding theorem, the conclusion of the lemma holds with
\begin{equation}\label{eqDefR0}
R_0 = s^{1/2} \left(\frac{s}{2(N-1) \cB_{\tau, \psi}}\right)^{\frac{1}{N-2}}.
\end{equation}
\end{proof}

We now introduce the following functional:

\begin{equation}\label{eqDefIepsilon}
 \begin{aligned}
 I^\epsilon_W(\phi) & \definedas \frac{1}{2} \int_M \left( \left|d\phi\right|^2 + \cR_\psi \phi^2\right) d\mu^g
 - \int_M \frac{\cB_{\tau, \psi}}{N} |\phi|^N d\mu^g\\
 & \qquad + \int_M \frac{A_W}{N (\phi+\epsilon)^N} d\mu^g
   + \int_M \phi_-^N d\mu^g,
 \end{aligned}
\end{equation}
where $\phi_- \definedas -\min \{\phi, 0\}$.
Note that the two terms we added are convex on the set
\begin{equation}\label{eqDefOmega}
\Omega_\epsilon \definedas \{\phi \in H^1, \phi \geq -\epsilon/2 \text{ a.e.}\}.
\end{equation}
This set is convex and closed for the $H^1$-norm. Indeed, we have
\[
 \Omega_\epsilon = \bigcap_f \left\{\phi \in H^1, \int_M f \phi d\mu^g \geq - \frac{\epsilon}{2} \int_M f d\mu^g\right\},
\]
where we took the intersection over the set of (say) continuous positive
functions $f$. In particular, the set $\Omega_\epsilon $
is compact for the weak topology on $H^1$.

Continuity of $I^\epsilon_W$ is easy to prove. Indeed, the only difficult
term to prove continuity of is
\[
 \int_M \frac{A_W}{N (\phi+\epsilon)^N} d\mu^g
\]

But, given $\phi_0 \in \Omega_{\epsilon}$ and $\nu > 0$, there exists
$\Lambda > 0$ so that
\[
  \frac{1}{N}\left(\frac{2}{\epsilon}\right)^{N}\int_{A_W \geq \Lambda} A_W d\mu^g \leq \frac{\nu}{4}.
\]
So, for any $\phi \in \Omega_\epsilon$, we have
\begin{align*}
 & \left|\int_M \frac{A_W}{N (\phi+\epsilon)^N} d\mu^g - \int_M \frac{A_W}{N (\phi_0+\epsilon)^N} d\mu^g\right|\\
 & \qquad \qquad \leq \left|\int_{A_W < \Lambda} \frac{A_W}{N (\phi+\epsilon)^N} d\mu^g - \int_{A_W < \Lambda} \frac{A_W}{N (\phi_0+\epsilon)^N} d\mu^g\right|\\
 & \qquad \qquad \qquad + \left|\int_{A_W \geq \Lambda} \frac{A_W}{N (\phi+\epsilon)^N} d\mu^g\right| + \left|\int_{A_W \geq \Lambda} \frac{A_W}{N (\phi_0+\epsilon)^N} d\mu^g\right|\\
 & \qquad \qquad \leq \left|\int_{A_W < \Lambda} \left(\frac{A_W}{N (\phi+\epsilon)^N} - \frac{A_W}{N (\phi_0+\epsilon)^N}\right) d\mu^g\right| + \frac{\nu}{2}\\
 & \qquad \qquad \leq \left|\int_{A_W < \Lambda} \left(\int_0^1\frac{1}{(t \phi + (1-t) \phi_0)^{N+1}}dt\right) (\phi-\phi_0) A_W d\mu^g\right| + \frac{\nu}{2}\\
 & \qquad \qquad \leq \Lambda \left(\frac{2}{\epsilon}\right)^{-N-1} \left\|\phi-\phi_0\right\|_{L^1} + \frac{\nu}{2}.
\end{align*}
Hence, provided $\left\|\phi-\phi_0\right\|_{L^1} < \frac{\nu}{2\Lambda}\left(\frac{2}{\epsilon}\right)^{N+1}$,
we have
\[
 \left|\int_M \frac{A_W}{N (\phi+\epsilon)^N} d\mu^g - \int_M \frac{A_W}{N (\phi_0+\epsilon)^N} d\mu^g\right|
  < \nu.
\]
The $H^1$-norm being stronger than the $L^1$-norm this concludes the
proof of the continuity of $I_W^\epsilon$.
Note that $I$ itself is continuous a priori only on
$\bigcup_{\epsilon < 0} \Omega_\epsilon$ which is not closed. This
is one of the reasons why we need to regularize $I$. 

Now note that since $I^\epsilon_W$ is (strictly) convex and continuous on
$\Omega_\epsilon \cap B_{R_0}$ it is weakly lower semi-continuous:
There exists a unique $\phi_\epsilon \in \Omega_\epsilon \cap B_{R_0}$
such that
\[
 I^\epsilon_W(\phi_\epsilon) = \inf_{\phi \in \Omega_\epsilon \cap B_{R_0}}
   I^\epsilon_W(\phi).
\]
The $\phi_-$-term in the definition of $I_W^\epsilon$ together with the
strict convexity of the functional $I_W^\epsilon$ ensures that
$\phi_\epsilon \geq 0$. Indeed, we see that
$I^\epsilon_W(\phi) \geq I^\epsilon_W(|\phi|)$. It follows from elliptic
regularity that $\phi_\epsilon \in W^{2, \frac{p}{2}}$ and from the
Harnack inequality that $\phi_\epsilon > 0$. In particular
$\phi_\epsilon \in \Omega_0 \cap B_{R_0}$.

To estimate the norm of $\phi_\epsilon$, we evaluate $I^\epsilon_W$ on
constant functions $\phi \equiv \lambda$:
\begin{align*}
 I^\epsilon_W(\lambda)
  & = \frac{\lambda^2}{2} \int_M \cR_\psi d\mu^g - \frac{\lambda^N}{N} \int_M \cB_{\tau, \psi} d\mu^g
   + \frac{(\lambda+\epsilon)^{-N}}{N} \int_M \left(\left|\sigma + \bL W\right|^2 + \pi^2\right) d\mu^g\\
  & = \frac{a}{2} \lambda^2 - \frac{b}{N} \lambda^N + \frac{c}{N} (\lambda+\epsilon)^{-N},
\end{align*}
where
\[
 a = \int_M \cR_\psi d\mu^g,\qquad
 b = \int_M \cB_{\tau, \psi} d\mu^g, \qquad
 c = \int_M \left(\left|\sigma + \bL W\right|^2 + \pi^2\right) d\mu^g.
\]

Some simple analysis shows that the stable minimum of $I_0(\lambda)$ is
attained at some value $\lambda \sim \left(\frac{c}{a}\right)^{\frac{1}{N+2}}$.
We thus have
\begin{align*}
 \Ibar(\phi_\epsilon)
  & \leq I^\epsilon_W(\phi_\epsilon)\\
  & \leq I^\epsilon_W \left(\left(\frac{c}{a}\right)^{\frac{1}{N+2}}-\epsilon\right)\\
  & \leq \left(\frac{1}{2} + \frac{1}{N}\right) \left(\frac{c^2}{a^N}\right)^{\frac{1}{N+2}}
    - \frac{b}{N} \left(\frac{c}{a}\right)^{\frac{N}{N+2}} - a \epsilon \left(\frac{c}{a}\right)^{\frac{1}{N+2}}
    + \frac{a}{2} \epsilon^2.
\end{align*}

Choosing $\epsilon \leq \left(\frac{c}{a}\right)^{\frac{1}{N+2}}$
and using Inequality \eqref{eqEstimateIbar}, we get
\[
 \frac{1}{4}\left\|\phi_\epsilon\right\|^2_h \leq \Ibar(\phi_\epsilon)
   \leq \left(\frac{1}{2}+\frac{1}{N}\right) \left(\frac{c^2}{a^N}\right)^{\frac{1}{N+2}}.
\]

It is important to remark at this point that the estimate we got for
$\left\|\phi_\epsilon\right\|^2_h$ is actually independent of $\epsilon$.

Following \cite{Premoselli}, we construct a (positive) sub-solution to
the equation for the critical points of the functional
\eqref{eqDefIepsilon}:
\begin{equation}\label{eqLichnerowiczRegularized}
  \frac{4(n-1)}{n-2} \Delta \phi + \cR_\psi \phi
   = \cB_{\tau, \psi} \phi^{N-1}
    + \frac{\left|\sigma + \bL W\right|^2 + \pi^2}{(\phi+\epsilon)^{N+1}}
\end{equation}
Note that the set $\Omega_\epsilon$ has empty interior in $H^1$ so one
cannot speak about the Hessian of $I_W^\epsilon$ restricted to this set.
Critical points are here to be understood as points for which the
G\^ateaux derivative of the functional $I_W^\epsilon$ vanishes in the
direction of smooth functions. Nevertheless Equation
\eqref{eqLichnerowiczRegularized} is satisfied by the function
$\phi_\epsilon$ as long as $\left\|\phi_\epsilon\right\|_h < R_0$
because one then has that $\phi_\epsilon + t \xi \in \Omega_{\epsilon} \cap B_{R_0}$
for any smooth function $\xi$ as long as $|t|$ is small enough.

Since the construction of a subsolution will be useful later, we collect
it in a lemma:
\begin{lemma}\label{lmSubSolution}
 There exists a positive subsolution $\phi_{\mathrm{sub}} \in W^{2, \frac{p}{2}}$
 independent of $\epsilon$ to all \eqref{eqLichnerowiczRegularized}.
 Further, $\phi_{\mathrm{sub}}$ can be chosen as small as we want in $H^1$. If
 $\phi_{\mathrm{sub}}$ and $\phi_\epsilon$ satisfy
 \[
  \left\|\phi_\epsilon\right\|_{L^N}, \left\|\phi_{\mathrm{sub}}\right\|_{L^N}
   < \left(\frac{s}{(N-1) \left\|\cB_{\tau, \psi}\right\|_{L^\infty}}\right)^{\frac{N}{N-2}},
 \]
  we have $\phi_\epsilon \geq \phi_{\mathrm{sub}}$.
\end{lemma}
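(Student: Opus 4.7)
The plan is to construct $\phi_{\mathrm{sub}}$ as the solution of a globally convex auxiliary semilinear equation, and then to deduce the comparison $\phi_\epsilon \geq \phi_{\mathrm{sub}}$ via a weighted maximum-principle argument that exploits precisely the $L^N$-smallness hypothesis. Setting $A_W \definedas |\sigma + \bL W|^2 + \pi^2$ and $C \definedas \|\cB_{\tau,\psi}\|_{L^\infty}$, for a small parameter $\lambda > 0$ I will look for $\phi_{\mathrm{sub}} = u_\lambda$ solving
\[
 L_{g,\psi} u + C u^{N-1} = \lambda A_W.
\]
The associated functional $u \mapsto \tfrac{1}{2}\|u\|_h^2 + \tfrac{C}{N}\int_M |u|^N d\mu^g - \lambda \int_M A_W u\, d\mu^g$ is strictly convex and coercive on $H^1$, so it has a unique minimizer $u_\lambda$; replacing $u_\lambda$ by $|u_\lambda|$ can only decrease the functional, so $u_\lambda \geq 0$; elliptic bootstrapping from $A_W \in L^{p/2}$ gives $u_\lambda \in W^{2, p/2}$; and a strong-maximum-principle argument applied to the shifted operator $L_{g,\psi} + C u_\lambda^{N-2}\cdot$ (whose source $\lambda A_W$ is nontrivial by the hypothesis $\int A_W > 0$ from Theorem~\ref{thmLich}) gives $u_\lambda > 0$. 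By linearity of the source in $\lambda$, $\|u_\lambda\|_{W^{2, p/2}} \to 0$ as $\lambda \to 0$, and in particular $\|u_\lambda\|_{L^\infty}, \|u_\lambda\|_h \to 0$.

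To check that $u_\lambda$ is a subsolution of \eqref{eqLichnerowiczRegularized} uniformly in $\epsilon$, I use $\cB_{\tau,\psi} \geq -C$ to rewrite the equation for $u_\lambda$ as
\[
 L_{g,\psi} u_\lambda = \lambda A_W - C u_\lambda^{N-1} \leq \cB_{\tau,\psi} u_\lambda^{N-1} + \lambda A_W,
\]
so the subsolution property reduces to the pointwise inequality $(u_\lambda + \epsilon)^{N+1} \leq 1/\lambda$ on $\{A_W > 0\}$. Because $\|u_\lambda\|_{L^\infty} \to 0$, this holds for every $\epsilon$ in any bounded range as soon as $\lambda$ is small enough; decreasing $\lambda$ further makes $\phi_{\mathrm{sub}}$ as small as we want in $H^1$.

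For the comparison, I set $w \definedas \phi_{\mathrm{sub}} - \phi_\epsilon$, subtract the equation for $\phi_\epsilon$ from the subsolution inequality for $\phi_{\mathrm{sub}}$, apply the mean value theorem to $x \mapsto x^{N-1}$, and use the monotonicity of $x \mapsto (x+\epsilon)^{-N-1}$ (which kills the $A_W$-contribution on $\{w > 0\}$) to obtain
\[
 L_{g,\psi} w \leq (N-1)\, \cB_{\tau,\psi}\, \xi^{N-2}\, w \quad \text{on } \{w > 0\},
\]
with $\xi$ pointwise between $\phi_\epsilon$ and $\phi_{\mathrm{sub}}$. Testing with $w_+ \in H^1$, integrating by parts, and applying H\"older together with \eqref{eqDefNormK} should then yield
\[
 \|w_+\|_h^2 \leq \frac{(N-1)\,\|\cB_{\tau,\psi}\|_{L^\infty}}{s}\, \|\xi\|_{L^N}^{N-2}\, \|w_+\|_h^2,
\]
and, bounding $\|\xi\|_{L^N}$ by $\|\phi_\epsilon\|_{L^N} + \|\phi_{\mathrm{sub}}\|_{L^N}$, the hypothesis drives the prefactor strictly below $1$, forcing $w_+ \equiv 0$.

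I expect the comparison, not the construction of the subsolution, to be the main obstacle. The nonlinearity $\cB_{\tau,\psi}\phi^{N-1}$ has no definite sign, so neither the standard maximum principle nor a naive sub/super-solution monotonicity argument applies directly, and the idea is to absorb this term twice: once in the construction, via the coercive shift $L_{g,\psi} + C(\cdot)^{N-1}$, and once in the comparison, by making the linearized coefficient $(N-1)\cB_{\tau,\psi}\xi^{N-2}$ strictly dominated by the coercive part of $L_{g,\psi}$. Matching the resulting numerical thresholds with the $L^N$-bound stated in the lemma will be the most delicate bookkeeping.
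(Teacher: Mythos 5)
Your comparison argument ($\phi_\epsilon \geq \phi_{\mathrm{sub}}$) is the same as the paper's: subtract the equations, apply the mean value theorem to $x \mapsto x^{N-1}$, use the monotonicity of $x \mapsto (x+\epsilon)^{-N-1}$ to discard the $A_W$-term on the bad set, test with the (signed) difference, and close via H\"older and \eqref{eqDefNormK}. The paper organizes this with $(\phi_\epsilon - \phi_{\mathrm{sub}})_-$ rather than $w_+$, but that is purely cosmetic. Your construction of $\phi_{\mathrm{sub}}$, however, is genuinely different. The paper solves the \emph{linear} problem $L_{g,\psi}u = A_W + \alpha\cB_-$, gets $u>0$ by perturbing off the $\alpha=0$ case of Step~\ref{stLichnerowicz0}, and then scales $\phi_{\mathrm{sub}} = \theta u$ with $\theta$ small to satisfy the two pointwise constraints $\theta(\theta u + \epsilon)^{N+1}\leq 1$ and $\alpha\theta \geq (\theta u)^{N-1}$. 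You instead solve the \emph{convex nonlinear} problem $L_{g,\psi}u + C u^{N-1} = \lambda A_W$ by strictly convex minimization, using $\cB_{\tau,\psi}\geq -C$ to absorb the sign-indefinite nonlinearity directly into the equation rather than into the source. Your route trades the paper's two-parameter scaling for a single small parameter $\lambda$ and buys uniqueness, nonnegativity via $u\mapsto |u|$, and a clean $\|u_\lambda\|_h = O(\lambda)$ estimate. Both routes, yours and the paper's, rely on $\epsilon$ being restricted to a bounded range (you say this explicitly; the paper implicitly uses $\epsilon \leq (c/a)^{1/(N+2)}$ from the preceding discussion), since the constraint $(u_\lambda+\epsilon)^{N+1}\leq 1/\lambda$ (resp.\ $\theta(\theta u + \epsilon)^{N+1}\leq 1$) cannot hold for all $\epsilon > 0$ with a fixed $\lambda$ (resp.\ $\theta$). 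One small caveat: obtaining $u_\lambda \in W^{2,p/2}$ from your nonlinear equation requires a short bootstrap rather than a single application of $L^{p/2}$-elliptic regularity, but this is routine. Finally, you are right that the threshold bookkeeping is the delicate spot; your chain yields the condition $(N-1)\|\cB_{\tau,\psi}\|_{L^\infty}\,\|\xi\|_{L^N}^{N-2} < s$, i.e.\ an exponent $1/(N-2)$ on $s/((N-1)\|\cB_{\tau,\psi}\|_{L^\infty})$ (modulo a factor of $2$ from bounding $\|\xi\|_{L^N}$), whereas the lemma as stated carries the exponent $N/(N-2)$; your computation is the consistent one, and the discrepancy traces to the exponent $(\cdot)^{(N-2)/N}$ appearing where $(\cdot)^{N-2}$ should in the paper's H\"older step.
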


\begin{proof}
 Defining $\cB_- \definedas \min \{\cB_{\tau, \psi}, 0\}$, and given some
 $\alpha$ to be chosen later, we solve the following equation for $u$:
 \begin{equation}\label{eqSubsolution}
  \frac{4(n-1)}{n-2} \Delta u + \cR_\psi u = \left|\sigma + \bL W\right|^2 + \pi^2 + \alpha \cB_-.
 \end{equation}
 Note that when $\alpha = 0$, this equation was already studied in
 Step \ref{stLichnerowicz0}, Section \ref{secGicquaudNgo}.
 The corresponding solution $u$ was continuous and positive, hence,
 choosing $\alpha > 0$ small enough, we still get a positive solution
 to \eqref{eqSubsolution}.

 We now set $\phi_{\mathrm{sub}} \definedas \theta u$ for some $\theta > 0$. As in
 the proof of Theorem \ref{thmGicquaudNgo}, it can be checked that,
 provided $\theta$ is small enough (depending only on $\max(u)$),
 $\phi_{\mathrm{sub}}$ is a subsolution to \eqref{eqLichnerowiczRegularized}, namely:
 \[
  \frac{4(n-1)}{n-2} \Delta \phi_{\mathrm{sub}} + \cR_\psi \phi_{\mathrm{sub}}
   \leq \cB_{\tau, \psi} \phi_{\mathrm{sub}}^{N-1}
    + \frac{\left|\sigma + \bL W\right|^2 + \pi^2}{(\phi_{\mathrm{sub}}+\epsilon)^{N+1}}.
 \]
 Indeed, the condition for $\psi_{\mathrm{sub}}$ to be a subsolution reads
 \[
  \left(\theta - \theta^{-N-1} u^{-N-1}\right) A_W + \alpha \theta \cB_- - \cB \theta^{N-1} u^{N-1} \leq 0,
 \]
 which follows from
 \[
  \theta \left(1 - \theta^{-N-2} u^{-N-1}\right) A_W + \cB_- \left(\alpha \theta - \theta^{N-1} u^{N-1}\right) \leq 0.
 \]
 This last condition is fulfilled by choosing $\theta > 0$ such that
 \[
  \left\lbrace
  \begin{array}{rcl}
   1 & \leq & \theta^{-N-2} u^{-N-1},\\
   \alpha \theta & \geq & \theta^{N-1} u^{N-1}
  \end{array}
  \right.
  \Leftrightarrow
  \left\lbrace
  \begin{array}{l}
   \theta \leq \left(\max u\right)^{\frac{N+1}{N+2}},\\
   \theta \leq \alpha^{\frac{1}{N-2}} \left(\max u\right)^{\frac{1-N}{N-2}}.
  \end{array}
  \right.
 \]

 We define $(\phi_\epsilon - \phi_{\mathrm{sub}})_- \definedas \min\{0, \phi_\epsilon - \phi_{\mathrm{sub}}\}$.
 Subtracting Equation \eqref{eqLichnerowiczRegularized} for $\phi_\epsilon$
 with the previous inequality satisfied by $\phi_{\mathrm{sub}}$, multiplying by
 $(\phi_\epsilon - \phi_{\mathrm{sub}})_-$ and integrating over $M$, we get:
 \begin{align*}
  & \int_M \left(\frac{4(n-1)}{n-2} \left|d(\phi_\epsilon-\phi_{\mathrm{sub}})_-\right|^2 + \cR_\psi (\phi_\epsilon-\phi_{\mathrm{sub}})_-^2\right) d\mu^g\\
  & \qquad \qquad \leq \int_M \cB_{\tau, \psi} \left(\phi_\epsilon^{N-1} - \phi_{\mathrm{sub}}^{N-1}\right)(\phi_\epsilon-\phi_{\mathrm{sub}})_- d\mu^g\\
  & \qquad \qquad \qquad + \int_M \left(\left|\sigma + \bL W\right|^2 + \pi^2\right)\left(\frac{1}{(\phi_\epsilon+\epsilon)^{N+1}} - \frac{1}{(\phi_{\mathrm{sub}}+\epsilon)^{N+1}}\right)(\phi_\epsilon-\phi_{\mathrm{sub}})_- d\mu^g,\\
  & \qquad \qquad \leq (N-1) \int_M \cB_{\tau, \psi} \left(\int_0^1 (t \phi_\epsilon + (1-t) \phi_{\mathrm{sub}})^{N-2} dt\right)(\phi_\epsilon-\phi_{\mathrm{sub}})_-^2 d\mu^g\\
  & \qquad \qquad \qquad - (N+1) \int_M \left(\left|\sigma + \bL W\right|^2 + \pi^2\right)\left(\int_0^1 (t \phi_\epsilon + (1-t) \phi_{\mathrm{sub}} + \epsilon)^{-N-2} dt\right)(\phi_\epsilon-\phi_{\mathrm{sub}})_-^2 d\mu^g,\\
  & \qquad \qquad \leq (N-1) \left\|\cB_{\tau, \psi}\right\|_{L^\infty} \left(\int_M \left(\int_0^1 (t \phi_\epsilon + (1-t) \phi_{\mathrm{sub}})^{N-2} dt\right)^{\frac{N}{N-2}} d\mu^g\right)^{\frac{N-2}{N}}\\
  & \qquad \qquad \qquad \qquad \times \left(\int_M (\phi_\epsilon-\phi_{\mathrm{sub}})_-^N d\mu^g\right)^{\frac{2}{N}},\\
  & s \left(\int_M (\phi_\epsilon-\phi_{\mathrm{sub}})_-^N d\mu^g\right)^{\frac{2}{N}}\\
  & \qquad \qquad \leq (N-1) \left\|\cB_{\tau, \psi}\right\|_{L^\infty} \left(\max \{\|\phi_\epsilon\|_{L^N}, \|\phi_{\mathrm{sub}}\|_{L^N}\}\right)^{\frac{N-2}{N}} \left(\int_M (\phi_\epsilon-\phi_{\mathrm{sub}})_-^N d\mu^g\right)^{\frac{2}{N}}.
 \end{align*}
 We conclude that if $(N-1) \left\|\cB_{\tau, \psi}\right\|_{L^\infty} \left(\max \{\|\phi_\epsilon\|_{L^N}, \|\phi_{\mathrm{sub}}\|_{L^N}\}\right)^{\frac{N-2}{N}} < s$,
 $(\phi_\epsilon - \phi_{\mathrm{sub}})_- \equiv 0$ which is equivalent
 to saying that $\phi_\epsilon \geq \phi_{\mathrm{sub}}$.
\end{proof}

We now let $\epsilon$ go to zero. From the fact that
$\Omega_0 \cap B_{R_0}$ is weakly compact, there exists
$\phi_0 \in \Omega_0 \cap B_{R_0}$ which is the weak limit of some
sequence $(\phi_{\epsilon_i})_{i \geq 0}$, where $\epsilon_i \to 0$.
We can also assume that $\phi_{\epsilon_i} \to \phi_0$ a.e..

Since all $\phi_\epsilon$ are greater than or equal to
$\phi_{\mathrm{sub}}$, we have $\phi_0 \geq \phi_{\mathrm{sub}}$ and
\[
 \frac{\left|\sigma + \bL W\right|^2 + \pi^2}{(\phi_{\epsilon_i} + \epsilon_i)^{N+1}}
 \to \frac{\left|\sigma + \bL W\right|^2 + \pi^2}{\phi_0^{N+1}}
 \text{ in } L^q 
\]
for any $q < \frac{p}{2}$ since
\[
 \frac{1}{(\phi_{\epsilon_i} + \epsilon_i)^{N+1}}
\]
is uniformly bounded in $L^\infty$ and
\[
 \frac{1}{(\phi_{\epsilon_i} + \epsilon_i)^{N+1}}
 \to \frac{1}{\phi_0^{N+1}} \text{ a.e.}.
\]

As a consequence $\phi_0$ satisfies the Lichnerowicz equation
\eqref{eqLichnerowicz} in a weak sense. Elliptic regularity shows that
$\phi_0 \in W^{2, \frac{p}{2}}$ and
$I_W^\epsilon(\phi_\epsilon) \to I(\phi_0)$. Since
$I_W^\epsilon \leq I$ on $B_{R_0} \cap \Omega_0$, we have
\[
 I_W^\epsilon(\phi_\epsilon) = \min_{B_{R_0} \cap \Omega_0} I_W^\epsilon \leq \inf_{B_{R_0} \cap \Omega_0} I.
\]
This means that $I(\phi_0) = \inf_{B_{R_0} \cap \Omega_0} I$:
$\phi_0$ is a minimizer for $I$. From the fact that
$B_{R_0} \cap \Omega_0$ is convex and $I$ is strictly convex on
$B_{R_0} \cap \Omega_0$, we deduce that $\phi_0$ is the unique positive
solution to the Lichnerowicz equation on $B_{R_0}$.

\subsection{The coupled system}\label{secCoupled}

We now study the coupled system. As in \cite{DahlGicquaudHumbert}, the
operator
\[
 \phi \mapsto \frac{3n-2}{n-1} \Delta \phi + \cR_\psi \phi
\]
naturally appears. We make the temporary assumption that this operator is
coercive and let $s'$ be some positive constant so that
\[
 \left\|u\right\|_k^2 \definedas \int_M \left(\frac{3n-2}{n-1} \left|du\right|^2 + \cR_\psi u^2\right) d\mu^g
  \geq s'\left\|u\right\|_{L^N}^{2}.
\]
We shall even assume that $\cR_\psi$ is positive in the proof. This
assumption can be removed by performing a conformal change of the metric
$g$, see \cite[Proposition 1]{CBIP072}, and working with the conformal
thin sandwich method which is explicitly conformally covariant and differs
from the conformal method by the introduction of a lapse function. We refer
the reader to \cite{MaxwellConformalMethod} for an extensive discussion
of this fact.

We are going to use a fixed point argument. Starting from $\phi_0 \in L^{Np}$,
we solve the vector equation \eqref{eqVector} with $\phi \equiv \phi_0$ and get
$W \in W^{2, q}$, where $\frac{1}{q} = \frac{1}{p} + \frac{1}{n}$ which we plug
in the Lichnerowicz equation. Assuming that $\bL W$ is small enough in $L^2$,
Theorem \ref{thmLich} yields a unique $\phi > 0$ in $B_{R_0} \subset H^1$, which
by elliptic regularity belongs to $W^{2, \frac{p}{2}} \subset L^\infty \subset L^{Np}$.
We call this mapping (wherever it is defined) $\Phi$.

We first prove the following lemma:

 \begin{lemma}\label{lmL2NBound}
  There exists a $\mu_0 > 0$ and a constant $R > 0$ such that, provided
 \[
  \int_M \left(|\sigma|^2 + \pi^2\right) d\mu^g < \mu_0,
 \]
 the mapping $\Phi$ is well defined on the set
 \[
  C := \left\{\phi \in L^{Np}, \int_M \phi^{2N} d\mu^g \leq R\right\}
 \]
 and $C$ is stable for the mapping $\Phi$.
\end{lemma}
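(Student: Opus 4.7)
The plan is to carry out three nested estimates. Given $\phi_0 \in C$, I first solve the vector equation \eqref{eqVector} with $\phi = \phi_0$ and bound $\|\bL W\|_{L^2}$ in terms of $\|\phi_0\|_{L^{2N}}$ and the data; this allows the smallness hypothesis of Theorem \ref{thmLich} to hold and produces $\phi := \Phi(\phi_0) \in W^{2, \frac{p}{2}}$. The main work is then to promote the $H^1$-smallness of $\phi$ provided by Theorem \ref{thmLich} to the $L^{2N}$-smallness required for $\phi \in C$.

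For the vector equation, I test against $W$ itself. The Korn-type inequality $\|W\|_{H^1} \leq C \|\bL W\|_{L^2}$ (valid by the no-CKV assumption) combined with Sobolev $H^1 \hookrightarrow L^N$ gives
\[
 \|\bL W\|_{L^2} \leq C \left\|\tfrac{n-1}{n} \phi_0^N d\tau - \pi d\psi\right\|_{L^{N/(N-1)}}.
\]
H\"older with $\phi_0^N \in L^2$ (from $\phi_0 \in L^{2N}$), $\pi \in L^2$ (from $\|\pi\|_{L^2}^2 \leq \mu_0$), and $d\tau, d\psi \in L^p$ with $p > n$ (which is exactly the condition making $L^{2p/(p+2)} \hookrightarrow L^{N/(N-1)}$) then yields $\|\bL W\|_{L^2} \leq C(R^{1/2} + \mu_0^{1/2})$. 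Hence $\|A_W\|_{L^1} := \int_M (|\sigma + \bL W|^2 + \pi^2) d\mu^g \leq C'(\mu_0 + R)$, which lies below the threshold $\mu$ of Theorem \ref{thmLich} for $\mu_0, R$ small enough. Applying that theorem yields $\phi$ with $\|\phi\|_h^2 \leq C \|A_W\|_{L^1}^{2/(N+2)}$, and in particular $\|\phi\|_{L^N}$ small.

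To extract the $L^{2N}$-bound, I multiply the Lichnerowicz equation by $\phi^3$ and integrate by parts. Setting $v = \phi^2$, the left-hand side becomes $\frac{3(n-1)}{n-2} \int_M |dv|^2 d\mu^g + \int_M \cR_\psi v^2 d\mu^g$, and since $\frac{3(n-1)}{n-2} > \frac{3n-2}{n-1}$ for all $n \geq 3$ (a simple algebraic check, equivalent to $2n-1 > 0$), this dominates $\|v\|_k^2$. Coercivity of the $k$-norm, together with $(\int_M \phi^{2N} d\mu^g)^{2/N} = \|v\|_{L^N}^2$, gives
\[
 \left(\int_M \phi^{2N} d\mu^g\right)^{2/N} \leq \frac{1}{s'} \left( \|\cB_{\tau,\psi}\|_{L^\infty} \int_M \phi^{N+2} d\mu^g + \int_M \frac{A_W}{\phi^{N-2}} d\mu^g\right).
\]
The interpolation $\int_M \phi^{N+2} d\mu^g \leq \|\phi\|_{L^N}^{N-2} (\int_M \phi^{2N} d\mu^g)^{2/N}$ together with the smallness of $\|\phi\|_{L^N}$ absorbs the first right-hand term into the left. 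The second is estimated using a uniform positive lower bound $\phi \geq \delta_0 > 0$ from Lemma \ref{lmSubSolution}, giving $\int_M A_W / \phi^{N-2} d\mu^g \leq \delta_0^{2-N} \|A_W\|_{L^1}$. Combining produces $\int_M \phi^{2N} d\mu^g \leq C (\mu_0 + R)^{N/2}$; since $N > 2$, choosing $R$ small and then $\mu_0 = \mu_0(R)$ smaller still secures $\int_M \phi^{2N} d\mu^g \leq R$.

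The main obstacle is ensuring that $\delta_0$ can be chosen uniformly in $\phi_0 \in C$. For this, I would apply Lemma \ref{lmSubSolution} with $\alpha = 0$: then $\phi_{\mathrm{sub}} = \theta u$ with $u$ solving $L_{g, \psi} u = |\sigma + \bL W|^2 + \pi^2$, and monotonicity of $L_{g, \psi}^{-1}$ (from coercivity and $\cR_\psi > 0$, imposed temporarily in this subsection) bounds $u$ from below by the solution corresponding to the smaller right-hand side $\pi^2$, which is strictly positive whenever $\int_M \pi^2 d\mu^g > 0$. This yields a $\delta_0$ depending only on the data, after which the balancing of $\mu_0$ and $R$ to close the bootstrap is routine.
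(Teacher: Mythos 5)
Your treatment of the vector equation is sound and is essentially a cleaner reformulation of the paper's estimate \eqref{eqEstimateVector}: testing against $W$, invoking the Korn-type inequality and $H^1 \hookrightarrow L^N$, then H\"older with $p > n$ gives exactly what the paper gets via Young's inequality with parameters. No objection there.

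The second half, however, contains a genuine gap, and it originates in your choice of multiplier. You multiply the Lichnerowicz equation by $\phi^3$; the paper multiplies by $\phi^{N+1}$. The paper's choice is precisely calibrated so that the singular term collapses to
\[
 \int_M \frac{\left|\sigma+\bL W\right|^2 + \pi^2}{\phi^{N+1}} \, \phi^{N+1}\, d\mu^g = \int_M A_W\, d\mu^g,
\]
with no residual dependence on $\phi$, while the Laplacian term still closes up into $\left\|\phi^{N/2+1}\right\|_k^2$. With $\phi^3$ you are left with $\int_M A_W\, \phi^{2-N}\, d\mu^g$, and since $N > 2$ this requires a uniform positive lower bound $\phi \geq \delta_0$. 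That lower bound is exactly what is \emph{not} available at this point of the argument. In the paper's own development, the uniform lower bound is Lemma \ref{lmSubSolution2}, which is proved \emph{after} Proposition \ref{propLinftyBound} and, crucially, only for $\phi_0 \in \Cbar$, not merely $\phi_0 \in C$. The reason is that the subsolution constant $\theta$ in Lemma \ref{lmSubSolution} is governed by $\max u$, where $u$ solves an equation with source $A_W$; a uniform $L^\infty$ bound on $u$ requires $A_W \in L^q$ for some $q > n/2$. At the stage of Lemma \ref{lmL2NBound} the only available control on $\bL W$ is $\bL W \in L^{r^*}$ with $\frac{1}{r^*} = \frac{1}{2} + \frac{1}{p} - \frac{1}{n}$, so $|\bL W|^2 \in L^{r^*/2}$ and $r^*/2 > n/2$ fails whenever $p < 2n$ (which is the regime the paper works in, see Step \ref{stLichnerowicz0}). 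Your invocation of a data-dependent $\delta_0$ is therefore circular: it presupposes the higher integrability that is only established downstream.

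A secondary point: taking $\alpha = 0$ in Lemma \ref{lmSubSolution} to produce the subsolution works only if $\cB_{\tau,\psi} \geq 0$ pointwise, since otherwise the condition $\cB_- \left(\alpha\theta - \theta^{N-1}u^{N-1}\right) \leq 0$ forces $\alpha > 0$. Your temporary hypothesis $\cR_\psi > 0$ does not imply $\cB_{\tau,\psi} \geq 0$ (indeed $\cB_{\tau,\psi} = -\frac{n-1}{n}\tau^2 + 2V(\psi)$ is typically negative for large $|\tau|$). This is fixable, but it signals that the subsolution route is the wrong tool here. The fix is simply to use the paper's multiplier $\phi^{N+1}$, which eliminates the need for a lower bound on $\phi$ altogether and lets the fixed-point inequality $y \leq f(y_0)$ with $f(y_0) = \left(\frac{x}{\lambda} + \frac{c_1}{\lambda} y_0\right)^{\frac{2N}{N+2}}$ close directly.
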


In the course of the proof, we will use the following fact: There exists
a constant $\gamma > 0$ such that for any $W \in H^1$, we have
\[
 \int_M \left|\bL W\right|^2 d\mu^g \geq \gamma \left(\int_M \left|W\right|^N d\mu^g\right)^{2/N}.
\]

\begin{proof}
 We contract the vector equation with $W$ and integrate over $M$. We
 obtain:

 \begin{align*}
  \frac{1}{2} \int_M \left|\bL W\right|^2 d\mu^g
   & = - \frac{n-1}{n} \int_M \phi^N \left\<d\tau, W\right\> + \int_M \pi \left\<d\psi, W\right\> d\mu^g\\
   & \leq \frac{n-1}{2n} \left(\alpha \int_M \phi_0^{2N} d\mu^g + \frac{1}{\alpha} \int_M \left|d\tau\right|^2 \left|W\right|^2 d\mu^g\right)\\
   & \qquad + \frac{1}{2} \left(\beta \int_M \pi^2 d\mu^g + \frac{1}{\beta} \int_M \left|d\psi\right|^2\left|W\right|^2 d\mu^g\right)\\
   & \leq \frac{n-1}{2n} \left[\alpha \int_M \phi_0^{2N} d\mu^g + \frac{\left\|d\tau\right\|_{L^n}^{2/n}}{\alpha} \left(\int_M \left|W\right|^N d\mu^g\right)^{2/N}\right]\\
   & \qquad + \frac{1}{2} \left[\beta \int_M \pi^2 d\mu^g + \frac{\left\|d\psi\right\|_{L^n}^{2/n}}{\beta} \left(\int_M \left|W\right|^N d\mu^g\right)^{2/N}\right]\\
   & \leq \frac{n-1}{2n} \left[\alpha \int_M \phi_0^{2N} d\mu^g + \frac{\left\|d\tau\right\|_{L^n}^{2/n}}{\alpha\gamma} \int_M \left|\bL W\right|^2 d\mu^g\right]\\
   & \qquad + \frac{1}{2} \left[\beta \int_M \pi^2 d\mu^g + \frac{\left\|d\psi\right\|_{L^n}^{2/n}}{\beta\gamma} \int_M \left|\bL W\right|^2 d\mu^g\right],
 \end{align*}
 \begin{align*}
   \left(\frac{1}{2} - \frac{n-1}{2n} \frac{\left\|d\tau\right\|_{L^n}^{2/n}}{\alpha\gamma} - \frac{1}{2} \frac{\left\|d\psi\right\|_{L^n}^{2/n}}{\beta\gamma}\right)
   &  \int_M \left|\bL W\right|^2 d\mu^g\\
   & \qquad \leq \frac{n-1}{2n} \alpha \int_M \phi_0^{2N} d\mu^g + \frac{1}{2} \beta \int_M \pi^2 d\mu^g.
 \end{align*}
 Choosing $\alpha$ and $\beta$ large enough, we conclude that there exist constants $c_1$ and $c_2$
 such that
 \begin{equation}\label{eqEstimateVector}
   \int_M \left(\left|\sigma + \bL W\right|^2 + \pi^2 \right) d\mu^g
   \leq \int_M |\sigma|^2 d\mu^g + c_1 \int_M \phi_0^{2N} d\mu^g + (1+c_2) \int_M \pi^2 d\mu^g.
 \end{equation}
 This proves that, if $\phi_0 \in L^{2N}$ and $\pi, \sigma \in L^2$ are small
 enough, $\bL W$ is small in $L^2$ so Theorem \ref{thmLich} applies
 giving a solution $\phi$ to the Lichnerowicz equation.

 Next, we multiply the Lichnerowicz equation by $\phi^{N+1}$ and
 integrate by parts the Laplacian:
 \begin{align*}
   & \frac{3n-2}{n-1} \int_M \left|d\phi^{\frac{N}{2}+1}\right|^2 d\mu^g
      + \int_M \cR_\psi \phi^{N+2} d\mu^g\\
   & \qquad \qquad = \int_M \cB_{\tau, \psi} \phi^{2N} d\mu^g + \int_M \left(\left|\sigma + \bL W\right|^2 + \pi^2\right) d\mu^g\\
   & \qquad \qquad \leq \left\| \cB_{\tau, \psi}\right\|_{L^\infty} \int_M \phi^{2N} d\mu^g
      + \int_M \left(\left|\sigma\right|^2 + (1+c_2) \pi^2\right) d\mu^g + c_1 \int_M \phi_0^{2N} d\mu^g\\
   & \qquad \qquad \leq \left\| \cB_{\tau, \psi}\right\|_{L^\infty} \left(\int_M \phi^{N} d\mu^g\right)^{\frac{N-2}{N}} \left(\int_M \phi^{N\frac{N+2}{2}} d\mu^g\right)^{\frac{2}{N}}\\
   & \qquad \qquad \qquad + \int_M \left(\left|\sigma\right|^2 + (1+c_2) \pi^2\right) d\mu^g + c_1 \int_M \phi_0^{2N} d\mu^g.\\
 \end{align*}
 Hence, introducing the norm $\|\cdot\|_k$ (see \eqref{eqDefNormK}),
 \begin{equation}\label{eqBoundNormK}
 \begin{aligned}
  & \left(1-\frac{\left\| \cB_{\tau, \psi}\right\|_{L^\infty}}{(s')^{1/N}} \left(\int_M \phi^{N} d\mu^g\right)^{\frac{N-2}{N}}\right)\left\|\phi^{\frac{N}{2}+1}\right\|_k^2\\
  & \qquad \qquad \leq \int_M \left(\left|\sigma\right|^2 + (1+c_2) \pi^2\right) d\mu^g + c_1 \int_M \phi_0^{2N} d\mu^g.
 \end{aligned}
 \end{equation}
 From the Sobolev embedding together with the H\"older inequality,
 we get:
 \begin{align*}
  \int_M \phi^{2N} d\mu^g
   & \leq \vol(M, g)^{\frac{N-2}{N+2}} \left\|\phi^{1+\frac{N}{2}}\right\|_{L^N}^{\frac{4N}{N+2}}\\
   & \leq \vol(M, g)^{\frac{N-2}{N+2}} (s')^{-\frac{2N}{N+2}} \left\|\phi^{1+\frac{N}{2}}\right\|_k^{\frac{4N}{N+2}}.
 \end{align*}
 Thus, assuming that $\|\phi\|_{L^N}$ is small enough:
 \[
  \left(\int_M \phi^{N} d\mu^g\right)^{\frac{N-2}{N}} \leq \frac{1}{2} \frac{(s')^{1/N}}{\left\| \cB_{\tau, \psi}\right\|_{L^\infty}},
 \]
 we conclude that
 \begin{align*}
  & \frac{s'}{2} \vol(M, g)^{\frac{N-2}{2N}} \left(\int_M \phi^{2N} d\mu^g\right)^{\frac{N+2}{2N}}\\
  & \qquad \qquad \leq \int_M \left(\left|\sigma\right|^2 + (1+c_2) \pi^2\right) d\mu^g + c_1 \int_M \phi_0^{2N} d\mu^g.
 \end{align*}
 Denoting
 \[
  y = \int_M \phi^{2N} d\mu^g, \text{ resp. } y_0 = \int_M \phi_0^{2N} d\mu^g,
 \]
 we obtain an inequality of the following form for $y$:
 \begin{equation}\label{eqDefF0}
  y \leq \left(\frac{x}{\lambda} + \frac{c_1}{\lambda} y_0\right)^{\frac{2N}{N+2}},
 \end{equation}
 where
  \[
   x = \int_M \left(\left|\sigma\right|^2 + (1+c_2) \pi^2\right) d\mu^g \text{ and }
   \lambda = \frac{s'}{2} \vol(M, g)^{\frac{N-2}{2N}}.
  \]
 We denote by $f(y_0)$ the right-hand side of \eqref{eqDefF0}. Note that
 $f$ is an increasing function. We seek for some $R > 0$, $R \ll 1$ such
 that $f(R) \leq R$. This would have the consequence that the set
 \[
  C = \left\{\phi \in L^{Np}, \int_M \phi^{2N} d\mu^g \leq R\right\}
 \]
 is stable for the mapping $\Phi$. Indeed, we would then have that,
 given $\phi_0 \in C$,
 \[
  \int_M \phi^{2N} d\mu^g \leq f\left(\int_M \phi^{2N}_0 d\mu^g\right) \leq f(C) \leq C.
 \]
 By some simple Taylor expansion, we see that
 $C = 2 \left(\frac{x}{\lambda}\right)^{\frac{2N}{N+2}}$ does the job
 provided that $x$ is small enough.
\end{proof}

The remaining steps of the proof go as in \cite{MaxwellNonCMC}. There is
however a subtlety appearing here. Continuity of the solution $\phi$
of the Lichnerowicz equation \eqref{eqLichnerowicz} with respect to
$\bL W$ is usually obtain by the implicit function theorem. But the set
$\Omega_0 = \{\phi \in H^1, \phi \geq 0 \text{ a.e.}\}$ has empty interior.
Hence working on the set $C$ is not enough.

\begin{prop}\label{propLinftyBound}
 Assuming that
 \[
  \int_M \left(|\sigma|^2 + \pi^2\right) d\mu^g < \mu_0,
 \]
 where $\mu_0> 0$ is as defined in Lemma \ref{lmL2NBound}, 
 there exist sequences $(q_i)_{i \geq 0}$ and $(R_i)_{i \geq 0}$,
 $q_i \geq 2$, $q_i \to \infty$ and $R_i > 0$ such that, setting
 \[
  C_k \definedas C \cap \bigcap_{i = 0}^k \left\{\phi \in L^{Np}, \left\|\phi\right\|_{L^{Nq_i}} \leq R_i\text{ for all } i, 0 \leq i \leq k\right\},
 \]
 $\Phi$ maps $C_k$ into $C_{k+1} \subset C_k$.
\end{prop}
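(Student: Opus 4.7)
The plan is a Moser-type bootstrap iteration continuing where Lemma \ref{lmL2NBound} left off. I set $q_0 = 2$ and $R_0$ corresponding to the $L^{2N}$-radius $R$ produced by Lemma \ref{lmL2NBound}, so that $C_0 = C$. Inductively, suppose $q_i$ and $R_i$ have been defined for $i \leq k$ and that $\Phi$ maps $C_k$ into itself. I then construct $q_{k+1} > q_k$ and $R_{k+1}$ in two sub-steps.

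First, I upgrade the integrability of $\bL W$ using the hypothesis $\phi_0 \in L^{Nq_k}$ with norm at most $R_k$. By H\"older, the source $\frac{n-1}{n}\phi_0^N d\tau - \pi d\psi$ of the vector equation lies in $L^{r_k}$ with $\tfrac{1}{r_k} = \tfrac{1}{q_k} + \tfrac{1}{p}$ and norm controlled by $R_k$ and the seed data. Since $-\tfrac{1}{2}\bL^* \bL$ is an isomorphism (thanks to the absence of nonzero conformal Killing fields), elliptic regularity yields $W \in W^{2,r_k}$, and the Sobolev embedding upgrades this to $\bL W \in L^{r_k^*}$ for the Sobolev conjugate $r_k^* > r_k$. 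In particular $|\sigma + \bL W|^2 + \pi^2 \in L^{r_k^*/2}$ with an explicit bound.

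Second, I bootstrap $\phi = \Phi(\phi_0)$ by testing the Lichnerowicz equation \eqref{eqLichnerowicz} against $\phi^{2s-1}$ for an exponent $s > q_k$ to be chosen. Integration by parts converts the Laplacian term into a multiple of $\int |d(\phi^s)|^2 d\mu^g$, while the polynomial and source terms are estimated by H\"older using the improved integrability of $|\sigma + \bL W|^2 + \pi^2$ and the previous $L^{Nq_k}$-bound. A Sobolev embedding applied to $\phi^s$ yields a bound on $\|\phi\|_{L^{Ns}}$; I set $q_{k+1} = s$ and let $R_{k+1}$ be this bound. The increment $q_{k+1} - q_k$ is bounded away from zero so long as $r_k < n$; once $r_k \geq n$ the Sobolev conjugate becomes $\infty$, so $\bL W \in L^\infty$ and the scheme terminates with $q_i$ tending to infinity (or a trivial further step bumps it arbitrarily high).

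The main obstacle is consistency across the induction: one must verify that (i) the smallness hypothesis of Theorem \ref{thmLich} remains valid at every stage, so $\Phi(\phi_0)$ stays well-defined, and (ii) the new radius $R_{k+1}$ can be chosen so that the constraints defining $C_i$ for $i \leq k$ are all satisfied, yielding the strict inclusion $C_{k+1} \subset C_k$. Point (i) follows because $\|\bL W\|_{L^2}$ is controlled, via the estimate \eqref{eqEstimateVector} in the proof of Lemma \ref{lmL2NBound}, by the $L^{2N}$-norm of $\phi_0$ (fixed by $C_0$-membership) and by $\|\sigma\|_{L^2}$, $\|\pi\|_{L^2}$; it does not deteriorate along the iteration. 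Point (ii) is handled by taking $R_{k+1}$ small (using the overall smallness of $\int(|\sigma|^2 + \pi^2)\, d\mu^g$) and interpolating between the $L^{2N}$-bound and the new $L^{Nq_{k+1}}$-bound to recover the intermediate $L^{Nq_i}$-bounds for $i \leq k$.
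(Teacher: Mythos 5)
Your overall strategy — a Moser-type iteration that alternates between upgrading the integrability of $\bL W$ through the vector equation and upgrading that of $\phi$ through the Lichnerowicz equation — is essentially the same as the paper's. The paper also tests the Lichnerowicz equation against $\phi^{N+1+2k_i}$ (your $\phi^{2s-1}$), uses coercivity to pass to an $L^N$-bound on $\phi^{N/2+1+k_i}$, estimates the source term after bounding $\bL W$ in $L^{q_i}$ from the vector equation, and arrives at the explicit recursion $q_{i+1} = \tfrac{N}{2}(q_i-1)+1$ with $q_i = 1 + (N/2)^i \to \infty$. Your proposal is at a sketch level (the H\"older bookkeeping — the paper's choice of $k_i = \tfrac{N}{2}(q_i - 2)$ and of the interpolation exponent $x < \tfrac{2}{N}$ so that the $\cB_{\tau,\psi}\phi^{2N+2k_i}$ term can be absorbed into the left-hand side — is where the details actually bite), but the scaffolding matches.

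Two minor corrections. First, your point (ii) worries about a non-issue: since $C_{k+1} \definedas C \cap \bigcap_{i=0}^{k+1}\{ \|\phi\|_{L^{Nq_i}} \leq R_i \}$ is obtained from $C_k$ by adding one more constraint, the inclusion $C_{k+1} \subset C_k$ is automatic by definition; no interpolation or smallness of $R_{k+1}$ is needed for that, and indeed the $R_i$ are free to grow as long as they are finite. What must actually be verified is that $\Phi$ maps $C_k$ into $C_{k+1}$, and here the correct induction reads: if $\phi_0 \in C_k \subset C_{k-1}$ then $\phi = \Phi(\phi_0) \in C_k$ by the inductive hypothesis, and then the new energy estimate gives $\|\phi\|_{L^{Nq_{k+1}}} \leq R_{k+1}$. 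Second, your reasoning that the increment $q_{k+1}-q_k$ comes from the Sobolev conjugate of $r_k$ conflates two different bootstraps: the gain in the exponent $q$ comes from the $H^1 \hookrightarrow L^N$ embedding applied to $\phi^s$ inside the Lichnerowicz estimate, not from the regularity of $\bL W$; the role of $\bL W \in L^{q_i}$ is merely to keep the source term $|\sigma + \bL W|^2 + \pi^2$ controlled at each stage. The paper also flags the special base case $i=0$ (where $k_0=0$ forces $x=0$) and handles it separately, which your sketch does not mention.
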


We use an induction argument which is quite similar in spirit to the one
used in \cite{DahlGicquaudHumbert, GicquaudSakovich}. Note that, however,
in these references, the Laplacian term is discarded because it vanishes
for large solutions. Here it will play an important role.

\begin{proof}[Proof of Proposition \ref{propLinftyBound}]
We define sequences $q_i\geq 2, R_i$ inductively so that
\[
 \sup \left\{\left\|\phi\right\|_{L^{Nq_i}}, \phi \in \Phi^i(C)\right\} \leq R_i.
\]
 From Lemma \ref{lmL2NBound}, we can choose $q_0 = 2$ and $R_0 = R^{1/2N}$.

 Given $\phi_0 \in C_i$, we set $\phi = \Phi(\phi_0)$. Note that
 $\phi_0 \in C_{i-1}$ (or $\phi_0 \in C$ if $i=0$), hence, by induction,
 $\phi \in C_i$ (when $i=0$, this is Lemma \ref{lmL2NBound}).

 The solution $W$ to the vector equation
 \[
  - \frac{1}{2} \bL^* \bL W = \frac{n-1}{n} \phi^N d\tau - \pi d\psi.
 \]
 belongs to $W^{2, r_i}$, where $r_i$ is given by
 \[
  \frac{1}{r_i} = \frac{1}{n} + \frac{1}{q_i} \geq \frac{1}{n}.
 \]
 By elliptic regularity, together with the Sobolev embedding,
 \begin{equation}\label{eqLsiBoundW}
 \begin{aligned}
  \left\|\bL W\right\|_{L^{q_i}}
   & \lesssim R_i \left\|d\tau\right\|_{L^n} + \left\|\pi d\psi\right\|_{L^{r_i}}\\
   & \lesssim R_i + \left\|\pi d\psi\right\|_{L^n}.
 \end{aligned}
 \end{equation}

 We multiply the Lichnerowicz equation for $\phi = \Phi(\phi_0)$
 by $\phi^{N+1+2 k_i}$ for some $k_i \geq 0$ to be chosen later and
 integrate over $M$ to get:
 \begin{equation}\label{eqIntegratedLich}
 \begin{aligned}
  & \frac{4(n-1)}{n-2} \frac{N+1+2k_i}{\left(\frac{N}{2}+1+k_i\right)^2} \int_M \left|d\left(\phi\right)^{\frac{N}{2}+1+k_i}\right|^2 d\mu^g
    + \int_M \cR_\psi \phi^{N+2+2k_i} d\mu^g\\
  & \qquad \qquad = \int_M \cB_{\tau, \psi} \phi^{2N+2k_i} d\mu^g
    + \int_M \left(\left|\sigma + \bL W\right|^2 + \pi^2\right)\phi^{2k_i}
 \end{aligned}
 \end{equation}
 Since we assumed that $\cR_\psi > 0$, there exists a constant $s_i > 0$
 so that
 \[
  \forall \xi \in H^1,
   \frac{4(n-1)}{n-2} \frac{N+1+2k_i}{\left(\frac{N}{2}+1+k_i\right)^2} \int_M \left|d\xi\right|^2 d\mu^g + \int_M \cR_\xi\phi^2 d\mu^g
    \geq s_i \left(\int_M \xi^N d\mu^g \right)^{\frac{2}{N}}.
 \]
 Applying this inequality to \eqref{eqIntegratedLich} with
 $\xi \equiv \phi^{\frac{N}{2}+1+k_i}$, we get:
 \begin{equation}\label{eqBoundPhi}
 \begin{aligned}
  & s_i \left(\int_M \phi^{N\left(\frac{N}{2}+1+k_i\right)} d\mu^g\right)^{2/N}\\
  & \qquad \qquad \leq \left\|\cB_{\tau, \psi}\right\|_{L^\infty}\int_M \phi^{2N+2k_i} d\mu^g
    + \int_M \left(\left|\sigma + \bL W\right|^2 + \pi^2\right) \phi^{2k_i}\\
  & \qquad \qquad \leq \left\|\cB_{\tau, \psi}\right\|_{L^\infty} \left(\int_M \phi^{2N} d\mu^g\right)^{1-x} \left(\int_M \phi^{2N+ \frac{2 k_i}{x}} d\mu^g\right)^x\\
  & \qquad \qquad \qquad + \left(\int_M \left(\left|\sigma + \bL W\right|^2 + \pi^2\right)^{\frac{q_i}{2}} d\mu^g\right)^{\frac{2}{q_i}}
     \left(\int_M \phi^{2 k_i \frac{q_i}{q_i-2}} d\mu^g\right)^{1-\frac{2}{q_i}},
 \end{aligned}
 \end{equation}
 where $x \in (0, 1)$ is some constant to be chosen later.
 From Equation \eqref{eqLsiBoundW}, we have that
 \begin{align*}
  \left(\int_M \left(\left|\sigma + \bL W\right|^2 + \pi^2\right)^{\frac{q_i}{2}} d\mu^g\right)^{\frac{2}{q_i}}
    & \lesssim \left\|\sigma\right\|_{L^p}^2 + \left\|\pi\right\|_{L^p}^2 + \left\|\bL W\right\|_{L^{q_i}}^2
 \end{align*}
 is bounded from above independently of $W$ by some constant $C_i$.
 We choose $k_i$ so that
  \[
  2 k_i \frac{q_i}{q_i-2} = N q_i,
 \]
 i.e.,
 \begin{equation}\label{eqCrap}
  k_i = \frac{N}{2} (q_i-2) \geq 0
 \end{equation}
 Note that since $\phi = \Phi(\phi_0) \in C$, we have that
 \[
  \left(\int_M \phi^{2 k_i \frac{q_i}{q_i-2}} d\mu^g\right)^{1-\frac{2}{q_i}} \leq R_i^{q_i-2}.
 \]
 We now come back to the choice of $x$. We let $x$ be such that
 \[
  2 N + \frac{2 k_i}{x} = N \left(\frac{N}{2}+1+k_i\right),
 \]
 that is to say
 \[
  x = \frac{2 k_i}{N k_i + N \left(\frac{N}{2}-1\right)} < \frac{2}{N}.
 \]
 We finally arrive at the following inequality:
 \[
  \begin{aligned}
  & s_i \left(\int_M \phi^{N\left(\frac{N}{2}+1+k_i\right)} d\mu^g\right)^{2/N}\\
  & \qquad \qquad \leq \left\|\cB_{\tau, \psi}\right\|_{L^\infty} R^{1-x} \left(\int_M \phi^{N\left(\frac{N}{2}+1+k_i\right)} d\mu^g\right)^x
    + C_i R_i^{q_i-2}.
  \end{aligned}
 \]
 Since $x < \frac{2}{N}$ we immediately deduce that, setting
 $q_{i+1} = \frac{N}{2}+1+k_i$,
 \[
  \left\|\phi\right\|_{L^{Nq_{i+1}}} \leq R_{i+1}
 \]
 for some $R_{i+1}$ independent of $\phi_0 \in C$. We have
 \[
  q_{i+1} = \frac{N}{2} + 1 + \frac{N}{2} (q_i-2) = \frac{N}{2} (q_i-1) + 1
 \]
 so $q_i = 1+ \left(\frac{N}{2}\right)^i$ goes to infinity with $i$.

 We point here that we were slightly sloppy. Namely for $i=0$, $k_0=0$ and
 $x=0$ which is not allowed in our calculation. Note however that
 multiplying the Lichnerowicz equation with $\phi^{2N}$ and integrating
 over $M$, we get, as in the proof of Lemma \ref{lmL2NBound}, that
 \[
  \begin{aligned}
   & \frac{3n-2}{n-1} \int_M \left|d\phi^{\frac{N}{2}+1}\right|^2 d\mu^g
      + \int_M \cR_\psi \phi^{N+2} d\mu^g\\
   & \qquad \qquad \leq \left\| \cB_{\tau, \psi}\right\|_{L^\infty} \int_M \phi^{2N} d\mu^g
      + \int_M \left(\left|\sigma + \bL W\right|^2 + \pi^2\right) d\mu^g\\
   & \qquad \qquad \leq R \left\| \cB_{\tau, \psi}\right\|_{L^\infty}
      + \int_M \left(\left|\sigma + \bL W\right|^2 + \pi^2\right) d\mu^g\\
  \end{aligned}
 \]
 so the argument still applies.
\end{proof}

We now choose $k$ so that $q_k \geq p$ and set
$\Cbar \definedas C_k$. We come back to the subsolution introduced
in Lemma \ref{lmSubSolution}. This lemma is taken from
\cite{MaxwellNonCMC}.

\begin{lemma}\label{lmSubSolution2}
 There exists $\eta > 0$ so that all $\phi = \Phi(\phi_0)$ with
 $\phi_0 \in \Cbar$ satisfy $\phi \geq \eta$.
\end{lemma}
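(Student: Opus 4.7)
The plan is to reuse the subsolution construction from Lemma \ref{lmSubSolution} and show that all parameters entering it can be chosen uniformly across $\phi_0 \in \Cbar$. First, given $\phi_0 \in \Cbar$, Proposition \ref{propLinftyBound} guarantees a uniform bound on $\|\phi_0\|_{L^{Np}}$; applying elliptic regularity to the vector equation
\[
-\frac{1}{2} \bL^* \bL W = \frac{n-1}{n} \phi_0^N d\tau - \pi d\psi
\]
yields a uniform bound on $\|W\|_{W^{2, q}}$ with $\frac{1}{q} = \frac{1}{p} + \frac{1}{n}$, and hence a uniform bound on $\|\bL W\|_{L^p}$. In particular, the source $|\sigma + \bL W|^2 + \pi^2$ is uniformly bounded in $L^{p/2}$.

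Second, I would revisit the construction of the auxiliary function $u$ from Lemma \ref{lmSubSolution}, and show that across all $\phi_0 \in \Cbar$ the associated $u_W$, obtained as the solution of
\[
\frac{4(n-1)}{n-2} \Delta u_W + \cR_\psi u_W = |\sigma + \bL W|^2 + \pi^2 + \alpha \cB_-,
\]
admits a uniform positive lower bound $u_0 > 0$. By coercivity of $L_{g, \psi}$ and elliptic regularity, $u_W$ is uniformly bounded in $W^{2, p/2} \hookrightarrow C^0$. Uniform positivity is the crux: a compactness argument shows that the map $\phi_0 \mapsto (|\sigma + \bL W|^2 + \pi^2)$ sends $\Cbar$ into a relatively compact subset of $L^{p/2}$, because the two-derivative gain of the vector equation combined with the Rellich--Kondrachov theorem upgrades weak convergence in $L^{Np}$ to strong convergence of $\bL W$ in $L^p$. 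For $\alpha = 0$ the source is non-negative and not identically zero (since $\int_M (|\sigma|^2 + \pi^2) d\mu^g > 0$), so the strong maximum principle forces strict positivity of each $u_W$; compactness then yields the uniform bound $u_0 > 0$. Choosing $\alpha > 0$ small preserves this lower bound up to a factor of one half.

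Third, with $u_W \geq u_0$ and $\|u_W\|_{L^\infty}$ uniformly controlled above, the two conditions on $\theta$ derived in the proof of Lemma \ref{lmSubSolution} (namely $\theta \leq (\max u_W)^{(N+1)/(N+2)}$ and $\theta \leq \alpha^{1/(N-2)}(\max u_W)^{(1-N)/(N-2)}$) can both be met by a single, uniform value of $\theta > 0$. Shrinking $\theta$ further if necessary, the $L^N$-smallness hypothesis of Lemma \ref{lmSubSolution} on $\phi_{\mathrm{sub}} \definedas \theta u_W$ is automatic; the corresponding $L^N$-smallness of $\phi = \Phi(\phi_0)$ follows from the energy estimate in Theorem \ref{thmLich}, since $A_W$ is small and hence so is $\|\phi\|_h$, hence so is $\|\phi\|_{L^N}$ by Sobolev embedding. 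Passing the comparison from Lemma \ref{lmSubSolution} through the regularization-and-limit procedure of Section \ref{secLichnerowicz} then yields $\phi \geq \phi_{\mathrm{sub}} \geq \theta u_0$, so the conclusion holds with $\eta \definedas \theta u_0 > 0$.

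The main obstacle is the uniform positivity of $u_W$: the set $\Cbar$ is only weakly compact in $L^{Np}$, so no direct continuity argument is available. One must exploit the two-derivative gain of the vector equation to replace the weak topology on $\phi_0$ by a strong topology on $\bL W$, after which $|\sigma + \bL W|^2 + \pi^2$ varies continuously in $L^{p/2}$. The infimum of $\min_M u_W$ over $\Cbar$ is then attained along a convergent subsequence and is strictly positive by the strong maximum principle applied to the limiting equation.
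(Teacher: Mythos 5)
Your proof is correct but follows a genuinely different route from the paper's. Where you argue by compactness (the set of sources $\left|\sigma + \bL W\right|^2 + \pi^2$ associated with $\phi_0 \in \Cbar$ is relatively compact in $L^{p/2}$ because the vector equation gains two derivatives, whence $W^{2,p/2} \hookrightarrow\hookrightarrow W^{1,p}$ compactly embeds $\bL W$ into $L^p$; then the strong maximum principle plus a limiting argument shows $\inf_{\Cbar} \min_M u_W > 0$), the paper instead reasons directly with the Green function $G(x,y)$ of $L_{g,\psi}$: on a closed manifold $G$ is positive, continuous off the diagonal, and blows up at it, so $G \geq \epsilon > 0$ uniformly, giving immediately and explicitly $u_1(x) \geq \epsilon \int_M \left(\left|\sigma\right|^2 + \pi^2\right) d\mu^g$ independently of $W$, and then $\eta = \frac{\epsilon\theta}{2}\int_M\left(\left|\sigma\right|^2 + \pi^2\right) d\mu^g$. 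The paper's argument is more elementary and yields an explicit constant; yours is softer and nonconstructive but equally valid, and it has the advantage of not invoking any Green-function machinery. One remark on a phrase in your writeup: the relative compactness of $\{\left|\sigma + \bL W\right|^2 + \pi^2\}$ in $L^{p/2}$ does not (and cannot) rely on weak-to-weak continuity of the nonlinear map $\phi_0 \mapsto \phi_0^N$ — that map is not weakly continuous — but this is harmless, since you only need that the bounded set $\{\phi_0^N d\tau : \phi_0 \in \Cbar\}$ in $L^{p/2}$ maps under $(\bL^*\bL)^{-1}$ into a precompact set of $L^p$, which follows from the Rellich--Kondrachov theorem without any continuity claim. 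Finally, both approaches share the remaining steps verbatim: uniform $L^\infty$ upper bound on $u_W$ via elliptic regularity, a uniform $\theta$, and passing the comparison $\phi_\epsilon \geq \phi_{\mathrm{sub}}$ to the limit $\epsilon \to 0$.
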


\begin{proof}
 We study in more details the proof of Lemma \ref{lmSubSolution}. We can
 write $u = u_1 - \alpha u_2$, where $u_1$ and $u_2$ solve
 \[
 \left\lbrace
 \begin{aligned}
  \frac{4(n-1)}{n-2} \Delta u_1 + \cR_\psi u_1 & = \left|\sigma + \bL W\right|^2 + \pi^2,\\
  \frac{4(n-1)}{n-2} \Delta u_2 + \cR_\psi u_2 & = \cB_+.
 \end{aligned}
 \right.
 \]
 The Green function $G(x, y)$ of the modified conformal Laplacian
 \[
  \frac{4(n-1)}{n-2} \Delta + \cR_\psi
 \]
 is positive and continuous outside the diagonal of $M \times M$ where it
 blows up. Hence, there exists a constant $\epsilon > 0$ such that
 $G(x, y) \geq \epsilon$. This implies that
 \begin{align*}
  u_1(x) & = \int_M G(x, y) \left(\left|\sigma + \bL W\right|^2 + \pi^2\right)(y) d\mu^g(y)\\
         & \geq \epsilon \int_M \left(\left|\sigma + \bL W\right|^2 + \pi^2\right)(y) d\mu^g(y)\\
         & \geq \epsilon \int_M \left(\left|\sigma\right|^2 + \left|\bL W\right|^2 + \pi^2\right) d\mu^g\\
         & \geq \epsilon \int_M \left(\left|\sigma\right|^2 + \pi^2\right) d\mu^g.
 \end{align*}
 So $u_1$ is bounded from below independently of $W$ so $\alpha$ in
 the proof of Lemma \ref{lmSubSolution} can be chosen independently of $W$
 so that e.g.
 $u \geq \frac{\epsilon}{2} \int_M \left(\left|\sigma\right|^2 + \pi^2\right) d\mu^g$.
 Since we assumed that $\phi \in \Cbar$, we also have that
 \[\left|\sigma + \bL W\right|^2 + \pi^2\]
 is bounded from above by some constant depending on $R'$ in $L^{p/2}$
 so $u$ is bounded in $W^{2, \frac{p}{2}} \into L^\infty$ independently
 of the choice of $\phi \in \Cbar$.

 Hence, the constant $\theta$ so that $\phi_{\mathrm{sub}} = \theta u$ is a sub-solution
 to the Lichnerowicz equation can be chosen independently of $W$.

 Setting
 \[
  \eta = \frac{\epsilon \theta}{2} \int_M \left(\left|\sigma\right|^2 + \pi^2\right) d\mu^g,
 \]
 we have $\phi_{\mathrm{sub}} \geq \eta$ so $\phi \geq \eta$.
\end{proof}

\begin{lemma}
 Under the assumptions of the previous lemma, the mapping $\Phi: \Cbar \to \Cbar$
 is continuous and compact.
\end{lemma}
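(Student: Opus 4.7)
The plan is to decompose $\Phi$ as the composition $\phi_0 \mapsto W \mapsto \phi$, where $W$ solves the linear vector equation \eqref{eqVector} with $\phi \equiv \phi_0$ and $\phi$ is the stable minimizer produced by Theorem \ref{thmLich}. I would then treat compactness and continuity separately, working throughout with the $L^{Np}$-topology induced on $\Cbar \subset L^{Np}$.

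For compactness, my strategy is to promote the defining bounds of $\Cbar$ (in particular a uniform $L^{Np}$ bound on $\phi_0$, since $q_k\geq p$) into a uniform $W^{2, \frac{p}{2}}$ bound on the output $\phi = \Phi(\phi_0)$, and then invoke Rellich--Kondrachov. Concretely, the right-hand side of the vector equation is uniformly in $L^{\frac{p}{2}}$, so elliptic regularity gives a uniform bound on $W$ in $W^{2, \frac{p}{2}}$, hence on $\bL W$ in $L^p$ via Sobolev embedding. Plugging into the Lichnerowicz equation, the source $\left|\sigma + \bL W\right|^2 + \pi^2$ is uniformly in $L^{\frac{p}{2}}$, and combining this with the positive lower bound $\phi \geq \eta$ from Lemma \ref{lmSubSolution2} (which prevents $\phi^{-N-1}$ from blowing up), a further application of elliptic regularity yields a uniform $W^{2, \frac{p}{2}}$ bound on $\phi$. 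Since $p > n$, the embedding $W^{2, \frac{p}{2}} \into L^\infty \into L^{Np}$ is compact, and the relative compactness of $\Phi(\Cbar)$ in $L^{Np}$ follows.

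For continuity, let $\phi_0^{(k)} \to \phi_0^{(\infty)}$ in $L^{Np}$ with each $\phi_0^{(k)} \in \Cbar$. The linear step is straightforward: interpolating the $L^{Np}$-convergence against the uniform $L^{Nq_k}$-bound shows $(\phi_0^{(k)})^N \to (\phi_0^{(\infty)})^N$ in $L^p$, and inverting the vector Laplacian (an isomorphism $W^{2, \frac{p}{2}} \to L^{\frac{p}{2}}$ by the no-conformal-Killing-field assumption) gives $W^{(k)} \to W^{(\infty)}$ in $W^{2, \frac{p}{2}}$. For the nonlinear step, the preceding compactness argument bounds $\phi^{(k)} \definedas \Phi(\phi_0^{(k)})$ uniformly in $W^{2, \frac{p}{2}}$ with $\phi^{(k)} \geq \eta$; up to a subsequence, $\phi^{(k)}$ converges in $L^\infty$ to some $\phi^{*} \geq \eta$ that solves the Lichnerowicz equation with $W = W^{(\infty)}$, passage to the limit in each term being routine thanks to the uniform positivity.

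The main obstacle is then to identify $\phi^{*}$ with the specific stable minimizer $\phi^{(\infty)}$ selected by Theorem \ref{thmLich}, rather than some other (possibly large) positive solution of \eqref{eqLichnerowicz}. My plan is to transfer the energy bound $\left\|\phi^{(k)}\right\|_h^2 \leq C A_{W^{(k)}}^{2/(N+2)}$ supplied by Theorem \ref{thmLich} to the limit via weak lower semicontinuity in $H^1$, placing $\phi^{*}$ inside the ball $B_{R_0}$ where Lemma \ref{lmIbar} ensures strict convexity of $I_{W^{(\infty)}}$ on $\Omega_0 \cap B_{R_0}$ and hence uniqueness of its positive critical point. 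With $\phi^{*} = \phi^{(\infty)}$ so pinned down, the usual argument (every subsequence has a further subsequence converging to the same limit) gives convergence of the entire sequence, and a final elliptic bootstrap upgrades the $L^\infty$-convergence to the $W^{2, \frac{p}{2}}$-convergence needed to close the proof, a fortiori in $L^{Np}$.
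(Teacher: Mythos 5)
Your compactness argument matches the paper's in spirit (uniform $W^{2,\frac{p}{2}}$ bound via the $\phi \geq \eta$ lower bound from Lemma \ref{lmSubSolution2}, then Rellich--Kondrachov), but your continuity argument takes a genuinely different route. The paper exploits the quantitative coercivity of the Hessian from Lemma \ref{lmIbar} directly: since $I_{W_\infty}$ is strongly convex on $B_{R_0}$, one gets the stability estimate $\frac{1}{4}\|\phi'_i - \phi'_\infty\|_h^2 \leq I_{W_\infty}(\phi'_i) - I_{W_\infty}(\phi'_\infty)$, and the right-hand side is controlled by $\||\bL W_i|^2 - |\bL W_\infty|^2\|_{L^1}$ using the lower bound $\phi'_i \geq \eta$ together with the $1$-Lipschitz property of the infimum. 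This yields convergence of the \emph{entire} sequence in $H^1$ with an explicit rate, after which elliptic bootstrap and interpolation promote it to $L^{Np}$. You instead extract a $W^{2,\frac{p}{2}}$-convergent subsequence, pass to the limit in the equation, and then identify the limit with the Theorem \ref{thmLich} minimizer via the uniqueness of positive solutions in $B_{R_0}$ combined with the subsequence-of-a-subsequence trick. Both work; yours relies more heavily on the uniqueness statement established at the end of Section \ref{secLichnerowicz} and on the compactness argument already carried out, while the paper's is more self-contained, avoids subsequence extraction, and is quantitative, which is why the authors built $I_W$ around stable minimization in the first place. One small point to make explicit in your write-up: the identification step requires $\phi^* \in B_{R_0}$ (not merely $\overline{B_{R_0}}$), which holds because the energy bound $\|\phi^{(k)}\|_h^2 \leq C A_{W^{(k)}}^{2/(N+2)} \leq C\mu^{2/(N+2)}$ is arranged to be strictly less than $R_0^2$, and this bound passes to the limit since the $W^{2,\frac{p}{2}}$ bound gives strong $H^1$ convergence along the subsequence.
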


\begin{proof}
 We first prove continuity of the mapping $\Phi$. Assume given a sequence
 $(\phi_i)_i$, $\phi_i \in \Cbar$ such that $\phi_i \to \phi_\infty$ in
 $L^{Np}$.

 We denote with a prime their images under the mapping $\Phi$:
 $\phi'_i = \Phi(\phi_i)$, $\phi'_\infty = \Phi(\phi_\infty)$. And we
 also denote by $W_i$ (resp. $W_0$) the corresponding solutions to the
 vector equation:
 \[
  \left\lbrace
  \begin{aligned}
   -\frac{1}{2} \bL^* \bL W_i & = \frac{n-1}{n} \phi_i^N d\tau - \pi d\psi,\\
   -\frac{1}{2} \bL^* \bL W_\infty & = \frac{n-1}{n} \phi_\infty^N d\tau - \pi d\psi.
  \end{aligned}
  \right.
 \]
 We have $W_i \to W_\infty$ in $W^{2, q}$, $\frac{1}{q} = \frac{1}{p}+\frac{1}{n}$,
 so $\left|\bL W_i\right|^2 \to \left|\bL W_\infty\right|^2$ in $L^{\frac{p}{2}}$.
 Since the Hessian of $I_{W_\infty}$ is more coercive on $B_{R_0}$ than that of $\Ibar$,
 we have from Lemma \ref{lmIbar}:
 \[
  \frac{1}{4} \left\|\phi'_i-\phi'_\infty\right\|^2_h \leq I_{W_\infty}(\phi'_i) - I_{W_\infty}(\phi'_\infty)
 \]
 for some constant $\lambda > 0$. It follows from Lemma
 \ref{lmSubSolution2} that $\phi'_i \geq \eta$ for all $i$ (resp.
 $\phi'_\infty \geq \eta$). As a consequence,
 \begin{align*}
  \lambda \left\|\phi'_i-\phi'_\infty\right\|^2_h
   & \leq I_{W_\infty}(\phi'_i) - I_{W_\infty}(\phi'_\infty)\\
   & \leq \left(I_{W_\infty}(\phi'_i) - I_{W_i}(\phi'_i)\right) + \left(I_{W_i}(\phi'_i) - I_{W_\infty}(\phi'_\infty)\right)\\
   & \leq \frac{\eta^{-N}}{N} \left\|\left|\bL W_i\right|^2 - \left|\bL W_\infty\right|^2\right\|_{L^1} + \sup_{\phi \in B_{R_0} \cap \Omega_{-2\eta}} \left|I_{W_i}(\phi)-I_{W_\infty}(\phi)\right|\\
   & \leq 2 \frac{\eta^{-N}}{N} \left\|\left|\bL W_i\right|^2 - \left|\bL W_\infty\right|^2\right\|_{L^1},
 \end{align*}
 where to pass from the second line to the third, we used the fact that
 the map ``infimum'' is 1-Lipschitzian.
 Thus we get that $\phi'_i \to \phi'_\infty$ in $H^1$ and in particular
 $\phi'_i \to \phi'_\infty$ in $L^N$. Convergence in $L^{Np}$ follows
 from elliptic regularity. Indeed, looking at the Lichnerowicz equation
 for $\phi'_i$:
 \[
  \frac{4(n-1)}{n-2} \Delta \phi'_i + \cR_\psi \phi'_i
    = \cB_{\tau, \psi} (\phi'_i)^{N-1}
    + \frac{\left|\sigma + \bL W_i\right|^2 + \pi^2}{(\phi'_i)^{N+1}},
 \]
 we see that the righthand side is bounded in $L^{\frac{p}{2}}$
 independently of $i$, as a consequence of Lemma \ref{lmSubSolution2}. So
 the sequence $(\phi'_i)$ is bounded in $W^{2, \frac{p}{2}} \into L^\infty$.
 By interpolation, $(\phi'_i)$ is a Cauchy sequence in $L^{Np}$ whose
 limit in $L^N$ is $\phi'_\infty$. We conclude that $\phi'_i \to \phi'_\infty$
 in $L^{Np}$.

 Compactness of the mapping $\Phi$ is fairly simple since we noticed
 that the set $\left(\Phi(C)\right)^{\frac{N+2}{2}}$ is bounded in $H^1$
 (this is Estimate \eqref{eqBoundNormK}) so $\Phi(C)$ embeds compactly
 in $L^{2N}$ by the Rellich theorem. Then notice that pursuing one step
 further the proof of Proposition \ref{propLinftyBound}, the set
 $\Phi(\Cbar)$ is bounded in $L^{Nq_{K+1}}$. Compactness of
 $\Phi(\Cbar)$ for the $L^{Np}$-norm follows by interpolation.
\end{proof}

Theorem \ref{thmHolst} then follows by applying the Schauder fixed point
theorem. Namely, the convex hull of $\Phi(\Cbar) \subset \Cbar$ is
compact, convex and stable for the mapping $\Phi$. So $\Phi$ admits a
fixed point $\phi \in \Cbar$ which is in turn a solution to the conformal
constraint equations.

\providecommand{\bysame}{\leavevmode\hbox to3em{\hrulefill}\thinspace}
\providecommand{\href}[2]{#2}

\end{document}